\newcommand\Hzdiv{H_0 (\div ; \Omega)}
\newcommand\p{\mathbf{p}}
\newcommand\q{\mathbf{q}}
\newcommand\tp{\tilde{\mathbf{p}}}
\newcommand\n{\mathbf{n}}
\newcommand\R{\mathbb{R}}
\newcommand\0{\mathbf{0}}
\newcommand\I{\I}
\newcommand\tY{\tilde{Y}}
\newcommand\tC{\tilde{C}}
\newcommand\tq{\tilde{\mathbf{q}}}
\newcommand\hp{\hat{\mathbf{p}}}
\renewcommand\H{\mathcal{H}}
\renewcommand\I{\mathcal{I}}
\newcommand\J{\mathcal{J}}
\renewcommand\L{\mathcal{L}}
\newcommand\N{\mathcal{N}}
\let\div\relax
\DeclareMathOperator{\div}{div}
\DeclareMathOperator{\tdiv}{\widetilde{\div}}
\DeclareMathOperator{\proj}{proj}
\DeclareMathOperator{\prox}{prox}
\DeclareMathOperator{\ran}{ran}
\DeclareMathOperator{\dom}{dom}
\DeclareMathOperator{\ri}{ri}
\DeclareMathOperator{\shr}{shr}
\DeclareMathOperator*{\argmin}{\arg\min}
\numberwithin{equation}{section}
\spnewtheorem{proposition}[theorem]{Proposition}{\bfseries}{\itshape}
\spnewtheorem{lemma}[theorem]{Lemma}{\bfseries}{\itshape}
\spnewtheorem{corollary}[theorem]{Corollary}{\bfseries}{\itshape}
\spnewtheorem{remark}[theorem]{Remark}{\itshape}{\rmfamily}
\spnewtheorem{assumption}[theorem]{Assumption}{\itshape}{\rmfamily}
\journalname{arXiv}
\begin{document}

\title{A Finite Element Nonoverlapping Domain Decomposition Method with Lagrange Multipliers for the Dual Total Variation Minimizations\thanks{The first author's work was supported by NRF grant funded by MSIT (NRF-2017R1A2B4011627)
and the second author's work was supported by NRF grant funded by the Korean Government (NRF-2015-Global Ph.D. Fellowship Program).}
}
\titlerunning{Domain Decomposition for Dual Total Variation}

\author{Chang-Ock Lee         \and
        Jongho Park
}
\authorrunning{C.-O. Lee and J. Park}
\institute{Chang-Ock Lee \at
              Department of Mathematical Sciences, KAIST, Daejeon 34141, Korea \\
              \email{colee@kaist.edu} 
           \and
           Jongho Park \at
           Department of Mathematical Sciences, KAIST, Daejeon 34141, Korea \\
           Tel.: +82-42-350-2790\\
           \email{jongho.park@kaist.ac.kr} 
}

\date{Received: date / Accepted: date}

\maketitle

\begin{abstract}
In this paper, we consider a primal-dual domain decomposition method for total variation regularized problems appearing in mathematical image processing.
The model problem is transformed into an equivalent constrained minimization problem by tearing-and-interconnecting domain decomposition.
Then, the continuity constraints on the subdomain interfaces are treated by introducing Lagrange multipliers.
The resulting saddle point problem is solved by the first order primal-dual algorithm.
We apply the proposed method to image denoising, inpainting, and segmentation problems
with either $L^2$-fidelity or $L^1$-fidelity.
Numerical results show that the proposed method outperforms the existing state-of-the-art methods.
\keywords{Total Variation \and Lagrange Multipliers \and Domain Decomposition \and Parallel Computation \and Image Processing}
\subclass{65N30 \and 65N55 \and 65Y05 \and 68U10}
\end{abstract}

\section{Introduction}
\label{Sec:Introduction}

After a pioneering work of Rudin~et al.~\cite{ROF:1992}, total variation minimization has been widely used in image processing.
In their work, authors proposed an image denoising model with the total variation regularizer, which is called Rudin--Osher--Fatemi~(ROF) model, as follows:
\begin{equation}
\label{ROF}
\min_{u \in BV(\Omega)} \left\{ \frac{\alpha}{2} \int_{\Omega} (u-f)^2 \,dx + TV(u) \right\},
\end{equation}
where $\Omega$ is an image domain, $f$ is a corrupted image, $\alpha > 0$ is a positive denoising parameter,
and $TV(u)$ is the total variation of $u$ defined as
\begin{equation*}
TV(u) = \sup \left\{ \int_{\Omega} u \div \q \,dx : \q \in (C_0^1 (\Omega))^2 , |\q(x)| \leq 1 \hspace{0.2cm}\forall x \in \Omega \right\},
\end{equation*}
where $|\cdot|$ denotes the Euclidean norm in $\mathbb{R}^2$.
The solution space $BV(\Omega)$ is the collection of $L^1$ functions with finite total variation. 
Thanks to the anisotropic diffusion property of the total variation term, the model~\eqref{ROF} effectively removes Gaussian noise and 
preserves edges and discontinuities of the image~\cite{SC:2003}.

Total variation minimization can be applied to not only image denoising, but also other interesting problems in image processing.
In~\cite{CS:2002}, an image inpainting model with the total variation regularizer was proposed.
The model in~\cite{CS:2002} is a simple modification of~\eqref{ROF}, which excludes the inpainting domain from the domain of integration of the fidelity term in~\eqref{ROF}.
In addition, \eqref{ROF} can be generalized to the image deconvolution problem by
replacing $u$ in the fidelity term by $Au$, where $A$ is either a specific convolution operator~\cite{CW:1998,YK:1996}.
In order to treat impulse noise, the $TV$-$L^1$ model, which uses $L^1$ fidelity instead of $L^2$ fidelity, was introduced in~\cite{CE:2005,Nikolova:2004}.
It is well-known that the $TV$-$L^1$ model preserves contrast of the image, while the conventional ROF model does not.
We also note that variational image segmentation problem can be represented as the total variation minimization by appropriate change of variables~\cite{CEN:2006}.
One can refer~\cite{CP:2016} for further results of total variation minimization.

This paper is concerned with domain decomposition methods~(DDMs) for such total variation regularized minimization problems.
DDMs are suitable for parallel computation since they solve a large scale problem by dividing it into smaller problems and treating them in parallel.
While DDMs for elliptic partial differential equations have been successfully developed over past decades, there have been relatively modest achievements in total variation minimization problems due to their own difficulties.
At first, the total variation term is nonsmooth and nonseparable.
Thus, the energy functional cannot be expressed as the sum of the local energy functionals in the subdomains in general.
Even more, the solution space $BV(\Omega)$ allows discontinuities of a solution on the subdomain interfaces,
so that it is difficult to impose appropriate boundary conditions to the local problems in the subdomains.

There have been several researches to overcome such difficulties and develop efficient DDMs for the total variation minimization~\cite{CTWY:2015,DCT:2016,Fornasier:2007,FLS:2010,FS:2009,HL:2013,HL:2015,LLWY:2016,LN:2017,LNP:2018,LPP:2019}.
The Gauss-Seidel and Jacobi type subspace correction methods for the ROF model were proposed in~\cite{Fornasier:2007,FLS:2010,FS:2009,LLWY:2016}, and they were extended to the case of mixed $L^1 / L^2$ fidelity in~\cite{HL:2013}.
However, in~\cite{LN:2017}, a counterexample was provided for convergence of subspace correction methods.
In~\cite{DCT:2016}, a convergent overlapping DDM for the convex Chan--Vese model~\cite{CEN:2006} was proposed.
Recently, a domain decomposition framework for the case of~$L^1$ fidelity was introduced in~\cite{LNP:2018}.
One of the effective ideas in DDMs for the total variation minimization is to consider the Fenchel--Rockafellar dual formulation of the model:
\begin{equation*}
\min_{\p \in H_0 (\div; \Omega)} \frac{1}{2\alpha} \int_{\Omega} (\div \p + \alpha f )^2 \,dx \hspace{0.5cm}
\textrm{subject to } |\p(x)| \leq 1 \hspace{0.2cm}\forall x \in \Omega
\end{equation*}
instead of the original one.
Here, $H_0 (\div ; \Omega)$ denotes the space of vector fields $\q$:~$\Omega \rightarrow \mathbb{R}^2$ such that
$\div \q \in L^2 (\Omega)$ and $\q \cdot \n = 0$ on $\partial \Omega$.
Consideration of the dual formulation resolves some difficulties mentioned above; the dual energy functional is separable, and the solution space $H_0 (\div ; \Omega)$ requires some regularity on the subdomain interfaces.
Even if there arises another difficulty of treating the inequality constraint $|\p(x)| \leq 1 \hspace{0.2cm}\forall x \in \Omega$, several successful researches have been done~\cite{CTWY:2015,HL:2015,LN:2017,LPP:2019}.

In this paper, we generalize the primal-dual DDM proposed in~\cite{LPP:2019} to a wider class of the total variation minimizations.
We consider a general model problem
\begin{equation}
\label{model_old}
\min_{u \in BV(\Omega)} \left\{ \alpha F(u) + TV(u) \right\}, 
\end{equation}
where $F$:~$BV(\Omega) \rightarrow \bar{\mathbb{R}}$ is a proper, convex, and lower semicontinuous functional satisfying additional properties such that $F$ is separable and simple in the sense that~\eqref{model_old} can be efficiently solved by the first order primal-dual algorithm~\cite{CP:2011}.
Such a class of total variation minimizations contains the ROF model, the $TV$-$L^1$ model, their inpainting variants, and the convex Chan--Vese model for image segmentation.
As we noted above, we treat the Fenchel--Rockafellar dual problem of~\eqref{model_old}:
\begin{equation}
\label{model_dual_old}
\min_{\p \in H_0 (\div ; \Omega)} \frac{1}{\alpha} F^*(\div \p) \hspace{0.5cm}
\textrm{subject to } |\p(x)| \leq 1 \hspace{0.2cm}\forall x \in \Omega ,
\end{equation}
where $F^*$ is the Legendre--Fenchel conjugate of $F$.
The image domain is decomposed into a number of disjoint subdomains $\left\{ \Omega_s \right\}_{s=1}^{\mathcal{N}}$, and the continuity along the subdomain interfaces is enforced.
By treating the continuity constraints on the subdomain interfaces by the method of Lagrange multipliers,
we obtain an equivalent saddle point problem.
Application of the Chambolle--Pock primal-dual algorithm~\cite{CP:2011} to the resulting saddle point problem yields our proposed method, that shows good performance for various total variation minimization problems such as image denoising, inpainting, and segmentation.

The rest of the paper is organized as follows.
We present the basic settings for design of DDM in Sect.~\ref{Sec:Basic}.
In Sect.~\ref{Sec:Model}, we state the abstract model problem which generalizes various problems in image processing such as image denoising, inpainting, and segmentation.
A convergent nonoverlapping DDM for the model problem is proposed in Sect.~\ref{Sec:Proposed}.
We apply the proposed method to several image processing problems and compare with the existing state-of-the-art methods in Sect.~\ref{Sec:Applications}.
We conclude the paper with remarks in Sect.~\ref{Sec:Conclusion}.

\section{Preliminaries}
\label{Sec:Basic}
In this section, we present the basic setting for design of DDM.
At first, we introduce notations that will be used throughout the paper.
Then, the discrete setting for the dual total variation minimization based on the finite element framework is provided.

\subsection{Notations}
Let $\H$ be the generic $n$-dimensional Hilbert space.
For $v=[v_1 , ..., v_n]^T, w=[w_1 , ..., w_n]^T \in \H$ and $1 \leq p < \infty$, the $p$-norm of $v$ is denoted by
\begin{equation*}
\| v \|_{p, \H} = \left( \sum_{i=1}^n |v_i|^p \right)^{\frac{1}{p}},
\end{equation*}
and the Euclidean inner product of $v$ and $w$ is denoted by
\begin{equation*}
\left< v, w\right>_{\H} = \sum_{i=1}^n v_i w_i .
\end{equation*}
We may drop the subscript $\H$ if there is no ambiguity.

In this paper, we use the symbol $^*$ as superscripts with two different meanings.
At first, for a convex functional $F$: $\H \rightarrow \bar{\mathbb{R}}$,
$F^*$:~$\H \rightarrow \bar{\mathbb{R}}$ denotes the Legendre--Fenchel conjugate of $F$, which is defined as
\begin{equation*}
F^*(v^*) = \sup_{v \in \H} \left\{ \left< v, v^* \right> - F(v) \right\}  
\end{equation*}
for every $v^* \in \H$.
On the other hand, when $A$: $\H \rightarrow \H$ is a linear operator on $\H$,
$A^*$ denotes the adjoint of $A$, that is,
\begin{equation*}
\left< v, Aw \right> = \left< A^* v, w \right>
\end{equation*}
for every $v, w \in \H$.
It is well-known that the matrix representation of $A^*$ is the transpose of the matrix representation of $A$.

For a convex functional $F$:~$\H \rightarrow \bar{\mathbb{R}}$, the effective domain of $F$, denoted by $\dom F$, is defined as
\begin{equation*}
\dom F = \left\{ u \in  \H : F(u) < \infty \right\}.
\end{equation*}
Also, for a convex subset $C$ of $\H$, we denote the relative interior of $C$ by $\ri C$, which is defined as the interior of $C$ when $C$ is regarded as a subset of its affine hull.
One may refer Sect.~6 of~\cite{Rockafellar:2015} for detailed topological properties of~$\ri C$.

For a subset $C$ of $\H$, we define the characteristic functional $\chi_C$: $\H \rightarrow \bar{\mathbb{R}}$ of $C$ by
\begin{equation*}
\chi_{C}(v) = \begin{cases} 0 & \textrm{if }v\in C, \\ \infty & \textrm{if } v \not\in C. \end{cases}
\end{equation*}
It is clear that the functional $\chi_C$ is convex if and only if $C$ is a convex subset of~$\H$.

\subsection{Fenchel--Rockafellar duality}
Throughout this paper, we use the notion of Fenchel--Rockafellar duality frequently.
For the sake of completeness, we provide its key features.
For more rigorous texts, readers may refer~\cite{CP:2016,Rockafellar:2015}, for instance.

Let $\H_1$ and $\H_2$ be finite-dimensional Hilbert spaces.
Consider the minimization problem of the form
\begin{equation}
\label{abstract_primal}
\min_{u \in \H_1} \left\{ F(u) + G(Ku) \right\},
\end{equation}
where $K$:~$\H_1 \rightarrow \H_2$ is a linear operator and $F$:~$\H_1 \rightarrow \bar{\mathbb{R}}$, $G$:~$\H_2 \rightarrow \bar{\mathbb{R}}$ are proper, convex, lower semicontinuous functionals.
We use the assumptions on~$F$ and $G$ presented in Corollary~31.2.1 of~\cite{Rockafellar:2015}.

\begin{assumption}
\label{Ass:equiv_general}
There exists $u \in \H_1$ such that $u \in \ri (\dom F)$ and $Ku \in \ri (\dom G)$.
\end{assumption}

\noindent
Under Assumption~\ref{Ass:equiv_general}, the following relations hold~\cite{Rockafellar:2015}:
\begin{subequations}
\begin{align}
\label{abstract_pd}
\min_{u \in \H_1} \left\{ F(u) + G(Ku) \right\} &= \min_{u \in \H_1} \max_{q \in \H_2} \left\{ \left< Ku, q\right>_{\H_2} + F(u) - G^*(q) \right\} \\
\label{abstract_dual}
&= - \left[ \min_{q \in \H_2} \left\{ F^*(-K^* q) + G^*(q) \right\} \right].
\end{align}
\end{subequations}
The saddle point problem in the right hand side of~\eqref{abstract_pd} is called the primal-dual formulation of~\eqref{abstract_primal}, and the minimization problem in \eqref{abstract_dual} is called the dual formulation of~\eqref{abstract_primal}.
Hence, it is enough to solve \eqref{abstract_pd} or \eqref{abstract_dual} instead of \eqref{abstract_primal} in many cases.

\subsection{Discrete setting}

Let $\Omega \subset \mathbb{R}^2$ be a rectangular image domain consisting of a number of rows and columns of pixels.
Since each pixel holds a value representing the intensity at a point, we may regard the image as a pixelwise constant function on $\Omega$.
We write the collection of all pixels in $\Omega$ as $\mathcal{T}$ and define the space $X$ for an image as
\begin{equation*}
X  = \left\{ u \ \in L^2 (\Omega) : u|_{T}\textrm{ is constant} \hspace{0.2cm}\forall T \in \mathcal{T} \right\}.
\end{equation*}
We note that $X \subset BV(\Omega)$.
We regard each pixel in~$\mathcal{T}$ as a square element so that $X$ becomes a piecewise constant square finite element space whose side length equals to~1.
It is clear that the functions
\begin{equation*}
\phi_T (x) = \begin{cases} 1 & \textrm{ if } x \in T , \\ 0 & \textrm{ if } x \not\in T , \end{cases} \hspace{0.5cm} T \in \mathcal{T}
\end{equation*}
form a basis for $X$.
For $u \in  X$ and $T \in \mathcal{T}$, let $(u)_T \in \mathbb{R}$ denote the degree of freedom~(dof) of $u$ associated with the basis function $\phi_T$.
Then, $u$ is represented as
\begin{equation*}
u = \sum_{T \in \mathcal{T}} (u)_T \phi_T.
\end{equation*}

To obtain a finite element discretization of the dual problem, the space $Y$ for the dual variables is defined by the lowest order Raviart--Thomas elements:
\begin{equation*}
Y = \left\{ \q \in \Hzdiv : \q|_{T} \in \mathcal{RT}_0(T) \hspace{0.2cm}\forall T \in \mathcal{T} \right\},
\end{equation*}
where $\mathcal{RT}_0(T)$ is the collection of the vector functions $\q$:~$T \rightarrow \R^2$ of the form
\begin{equation*}
\q (x_1 , x_2) = \begin{bmatrix} a_1 + b_1 x_1 \\ a_2 + b_2 x_2 \end{bmatrix}.
\end{equation*}
We notice that the divergence operator in the continuous setting is well-defined on $Y$ and $\div Y \subset X$.
Each dof of $Y$ is the value of the normal component on a pixel edge.
Let $\I$ be the set of indices of the basis functions for $Y$ and $\left\{\bm{\psi}_i \right\}_{i \in \I}$ be the basis.
For $\p \in Y$ and $i \in \I$, we denote the dof of $\p$ associated with the basis function $\bm{\psi}_i$ as $(\p)_i \in \mathbb{R}$; we have
\begin{equation*}
\p = \sum_{i \in \I} {(\p)_i \bm{\psi}_i}.
\end{equation*}
Throughout the paper, the $p$-norm of $\p \in Y$ is defined as the $p$-norm of the vector of dofs of~$\p$ for $1 \leq p < \infty$.
In this case, one may regard $\| \cdot \|_{2, Y}$ as a lumped $(L^2 (\Omega))^2$-norm with proper scaling; see Remark~2.2 of~\cite{LPP:2019}.

In order to treat the inequality constraints appearing in the dual formulation~\eqref{model_dual_old}, we define the convex subset $C$ of $Y$ by
\begin{equation}
\label{C}
C = \left\{ \p \in Y : |(\p)_i| \leq 1 \hspace{0.2cm}\forall i \in \I \right\}.
\end{equation}
The projection of $\p \in Y$ onto $C$ can be easily computed by
\begin{equation}
\label{proj_C}
(\proj_C \p)_i = \frac{(\p)_i}{\max \left\{ 1, |(\p)_i| \right\}} \hspace{0.5cm} \forall i \in \I.
\end{equation}

\subsection{Domain decomposition setting}
We decompose the image domain $\Omega$ into $\mathcal{N} = N \times N$ disjoint rectangular subdomains $\left\{ \Omega_s \right\}_{s=1}^{\mathcal{N}}$.
For two subdomains $\Omega_s$ and $\Omega_t$ ($s < t$) sharing a subdomain edge~(interface), let $\Gamma_{st} = \partial \Omega_s \cap \partial \Omega_t$ be the shared subdomain edge between them.
Also, we define the union of the subdomain interfaces $\Gamma$ by $\Gamma = \bigcup_{s<t} \Gamma_{st}$.
We assume that $\Gamma$ does not cut through any elements in~$\mathcal{T}$.

Now, we define the local function spaces in the subdomains.
For $s=1, ..., \mathcal{N}$, let $\mathcal{T}_s$ be the collection of all pixels in $\Omega_s$.
The local primal function space $X_s$ is defined by
\begin{equation*}
X_s  = \left\{ u \in L^2 (\Omega_s) : u|_{T}\textrm{ is constant} \hspace{0.2cm}\forall T \in \mathcal{T}_s \right\}.
\end{equation*}
Obviously, we have $X = \bigoplus_{s=1}^{\mathcal{N}} X_s$.
Furthermore, we define the natural restriction operator $R_s$:~$X \rightarrow X_s$ by
\begin{equation}
\label{R}
R_s u = u|_{\Omega_s}.
\end{equation}
Then, the adjoint $R_s^*$:~$X_s \rightarrow X$ of $R_s$ becomes the extension-by-zero operator
\begin{equation*}
(R_s^* u_s )_T = \begin{cases} (u_s)_T & \textrm{ if } T \subset \Omega_s , \\ 0 & \textrm{ if } T \not\subset \Omega_s , \end{cases} \hspace{0.2cm} T \in \mathcal{T}.
\end{equation*}
The local dual function spaces are defined in the tearing-and-interconnecting\linebreak sense~\cite{FR:1991}.
More precisely, we define the local dual function space $\tY_s$ as
\begin{equation*}
\tY_s = \left\{ \tq_s \in  H(\div ; \Omega_s) : \tq_s \cdot \n_s = 0 \textrm{ on } \partial \Omega_s \setminus \Gamma \textrm{, }
\tq_s |_{T} \in \mathcal{RT}_0 (T) \hspace{0.2cm}\forall T \in \mathcal{T}_s \right\}.
\end{equation*}
Note that the boundary condition $\tilde{\q}_s \cdot \n_s = 0$ is not imposed on $\Gamma \cap \partial \Omega_s$ for $\tY_s$.
Thus, $\tY_s$ has dofs on the pixel edges contained in $\partial \Omega_s \cap \Gamma$.
Let $\tilde{\I}_s$ be the set of indices of the basis functions for $\tY_s$.
Similarly to~\eqref{C}, the inequality-constrained subset $\tC_s$ of $\tY_s$ is defined by
\begin{equation*}
\tC_s = \left\{ \tp_s \in \tY_s : |(\tp_s)_i| \leq 1 \hspace{0.2cm}\forall i \in \tilde{\I}_s \right\}.
\end{equation*}
The orthogonal projection onto $\tC_s$ is computed as
\begin{equation*}
(\proj_{\tC_s} \tp_s )_i = \frac{(\tp_s)_i}{\max \left\{ 1, |(\tp_s)_i| \right\}} \hspace{0.5cm} \forall i \in \tilde{\I}_s.
\end{equation*}
Let $\tY$ and $\tC$ be the direct sums of the sets $\tY_s$'s and $\tC_s$'s, respectively.
By definition, functions in $\tY$ may have discontinuities on $\Gamma$.
Let $\I_{\Gamma}$ be the collection of dofs of $Y$ on $\Gamma$.
The jump operator $B$: $\tY \rightarrow \R^{|\I_{\Gamma}|}$ measures the magnitude of such discontinuities, 
that is, $B$ is defined as
\begin{equation*}
B\tp|_{\Gamma_{st}} = \tp_s \cdot \n_{st} - \tp_t \cdot \n_{st}, \hspace{0.5cm} s<t,
\end{equation*}
where $\tp_s = \tp|_{\Omega_s}$.
Clearly, there is a natural isomorphism between $Y$ and $\ker B \subset \tY$.
For later use, we provide an upper bound of the operator norm of $B$~\cite{LPP:2019}.

\begin{proposition}
\label{Prop:B_norm}
The operator norm of $B$\emph{:} $\tY \rightarrow \R^{|\I_{\Gamma}|}$ has a bound $\| B \|_{2}^2 \leq 2$.
\end{proposition}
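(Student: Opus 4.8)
The plan is to reduce the claim to the elementary inequality $(a-b)^2 \le 2(a^2 + b^2)$ combined with a dof-counting argument exploiting the fact that $\Gamma$ is aligned with the pixel grid. First I would set up the correspondence between the dofs indexing $\R^{|\I_{\Gamma}|}$ and those of the local spaces. Every index $i \in \I_{\Gamma}$ is attached to one pixel edge $e_i \subset \Gamma$. Since an edge separates exactly two pixels and $\Gamma$ consists of interior interfaces only, $e_i$ lies in $\partial\Omega_s \cap \partial\Omega_t$ for exactly one pair $s(i) < t(i)$; moreover $e_i$ carries exactly one normal-component dof in $\tY_{s(i)}$ and one in $\tY_{t(i)}$, which I identify with $i$. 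With the orientation of the local dofs on $\Gamma$ chosen compatibly with $\n_{s(i)t(i)}$ --- a relabeling that changes no $2$-norm --- the definition of $B$ gives $(B\tp)_i = (\tp_{s(i)})_i - (\tp_{t(i)})_i$ for every $\tp \in \tY$.

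Next I would expand both norms and apply the scalar inequality termwise:
\[
\| B\tp \|_2^2 = \sum_{i \in \I_{\Gamma}} \bigl( (\tp_{s(i)})_i - (\tp_{t(i)})_i \bigr)^2 \;\le\; 2 \sum_{i \in \I_{\Gamma}} \bigl( (\tp_{s(i)})_i^2 + (\tp_{t(i)})_i^2 \bigr).
\]
The crucial observation is that the last sum counts each dof of $\tY$ at most once: a dof $(\tp_s)_j$ enters this sum only when $j$ is the normal-component dof on a pixel edge contained in $\partial\Omega_s \cap \Gamma$, and that edge determines $i$ uniquely. Hence
\[
\sum_{i \in \I_{\Gamma}} \bigl( (\tp_{s(i)})_i^2 + (\tp_{t(i)})_i^2 \bigr) \;\le\; \sum_{s=1}^{\N} \sum_{j \in \tilde{\I}_s} (\tp_s)_j^2 \;=\; \| \tp \|_{2, \tY}^2 ,
\]
and chaining the two displays yields $\| B\tp \|_2^2 \le 2 \| \tp \|_{2, \tY}^2$ for all $\tp \in \tY$, i.e.\ $\| B \|_2^2 \le 2$.

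I expect the only real work to be in the first paragraph: making rigorous that the map $i \mapsto (s(i), t(i))$ is well defined and that the associated local dofs are distinct for distinct $i$, i.e.\ that the pixel edges lying on $\Gamma$ are partitioned among the interfaces $\Gamma_{st}$ and that each contributes precisely one dof to each of the two neighbouring local spaces $\tY_s$. This is exactly where the hypothesis that $\Gamma$ does not cut through elements of $\mathcal{T}$, and the tearing-and-interconnecting definition of $\tY_s$ (which retains the normal-component dofs on $\Gamma \cap \partial\Omega_s$ but discards those on $\partial\Omega_s \setminus \Gamma$), are used; once this combinatorial fact is in hand, the estimate is immediate. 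I would also remark that the bound is sharp: choosing $\tp$ supported on a single interface edge with opposite signs on the two sides gives $\| B\tp \|_2^2 = 2 \| \tp \|_{2, \tY}^2$.
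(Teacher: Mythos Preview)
Your argument is correct. Note, however, that the present paper does not actually prove this proposition: it merely states the bound and cites the earlier work~\cite{LPP:2019} for the proof. The elementary argument you give --- apply $(a-b)^2 \le 2(a^2+b^2)$ componentwise and observe that each local interface dof is counted at most once --- is precisely the standard proof (and is the one in the cited reference), so there is nothing to compare.
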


Proposition~\ref{Prop:B_norm} will be used for the estimation of the range of the parameters in the proposed method.

\section{The model problem}
\label{Sec:Model}
The model problem we consider in this paper is the total variation regularized convex minimization problem defined on the image domain $\Omega$:
\begin{equation}
\label{model}
\min_{u \in X} \left\{ \J(u) := \alpha F(u) + TV(u) \right\},
\end{equation}
where $F$: $X \rightarrow \bar{\mathbb{R}}$ is a proper, convex, lower semicontinuous functional, $TV(u)$ is the discrete total variation of $u$ given by
\begin{equation}
\label{dTV}
TV(u) = \max_{\p \in C} \left< u, \div \p \right>_X,
\end{equation}
and $\alpha >0$ is a positive parameter.
In addition, we assume that $F$ satisfies the following three assumptions.

\begin{assumption}
\label{Ass:separable}
$F$ is separable in the sense that there exist proper, convex, lower semicontinuous local energy functionals $F_s$: $X_s \rightarrow \bar{\mathbb{R}}$ such that
$$
F(u) = \sum_{s=1}^{\mathcal{N}} F_s (R_s u),
$$
where $R_s$ is the restriction operator defined in~\eqref{R}.
\end{assumption}

\begin{assumption}
\label{Ass:simple}
For any $u \in X$ and $\sigma > 0$, the proximity operator $\prox_{\sigma F} (u)$ defined by
$$
\prox_{\sigma F} (u) = \argmin_{v \in X} \left\{ F(v) + \frac{1}{2\sigma} \| v- u \|_{2,X}^2 \right\}
$$
has a closed-form formula.
\end{assumption}

\begin{assumption}
\label{Ass:equiv}
$\ri ( \dom F ) \neq \emptyset$.
\end{assumption}

Assumption~\ref{Ass:separable} makes~\eqref{model} more suitable for designing DDMs.
It will be explained in Sect.~\ref{Sec:Proposed}.
Also, by Assumption~\ref{Ass:simple}, we are able to adopt the primal-dual algorithm~\cite{CP:2011} to solve~\eqref{model}.
Indeed, if Assumption~\ref{Ass:simple} holds, we can solve an equivalent primal-dual form of~\eqref{model},
\begin{equation}
\label{model_pd}
\min_{u \in X} \max_{\p \in Y} \left\{ -\left<u, \div \p \right>_X + \alpha F(u) - \chi_{C} (\p) \right\} ,
\end{equation}
by the primal-dual algorithm.
For the model problem~\eqref{model}, Assumption~\ref{Ass:equiv} is equivalent to Assumption~\ref{Ass:equiv_general} when $G(Ku) = TV(u)$ since $\ri (\dom (\| \cdot \|_{1, Y})) = Y$.
That is, Assumption~\ref{Ass:equiv} is essential for the primal-dual equivalence.
It is trivial that all the problems we will deal with in this paper satisfy Assumption~\ref{Ass:equiv}.

Next, we consider the dual formulation of \eqref{model}:
\begin{equation}
\label{model_dual}
\min_{\p \in Y} \left\{ \frac{1}{\alpha} F^*(\div \p) + \chi_{C} (\p) \right\}.
\end{equation}
From~\eqref{model_pd}, it is possible to deduce a relationship between solutions of the primal problem~\eqref{model} and the dual problem~\eqref{model_dual}:
\begin{equation} \begin{split}
\label{pd_relation}
0 &\in - \div \p + \alpha \partial F(u) , \\
0 &\in - \div^* u - \partial \chi_C (\p) .
\end{split} \end{equation}
As the standard discretization for the total variation minimization is the finite difference method, we provide a relation between the finite difference discretization and our finite element discretization.
The following proposition means that a solution of the finite difference discretization of~\eqref{model_old} can be recovered from a solution of~\eqref{model_dual}.

\begin{proposition}
\label{Prop:equiv}
Assume that the image domain~$\Omega$ consists of $M \times N$ pixels.
Let $\p^* \in Y$ be a solution of~\eqref{model_dual}.
If $u^* \in X$ and $\p^*$ satisfy the primal-dual relation~\eqref{pd_relation}, then~$u^*$ is a solution of the minimization problem
\begin{equation*}
\min_{u \in X} \left\{ \alpha F(u) + \left\| |Du| \right\|_1 \right\},
\end{equation*}
where $D u$ is the forward finite difference operator
\begin{equation} \begin{split}
\label{FFD}
(Du)_{ij}^1 &= \begin{cases} u_{i+1, j} - u_{ij} & \textrm{ if } i=1, ..., M-1, \\ 0 & \textrm{ if } i = M,\end{cases} \\
(Du)_{ij}^2 &= \begin{cases} u_{i, j+1} - u_{ij} & \textrm{ if } j=1, ..., N-1, \\ 0 & \textrm{ if } j = N\end{cases}
\end{split} \end{equation}
and $(|Du|)_{ij} = |(Du)_{ij}^1| + |(Du)_{ij}^2|$.
\end{proposition}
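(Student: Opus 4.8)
The plan is to reduce the statement to a single algebraic identity: that the discrete total variation $TV(u)=\max_{\p\in C}\langle u,\div\p\rangle_X$ coincides, \emph{as a function on} $X$, with the anisotropic finite difference total variation $\big\||Du|\big\|_1=\sum_{ij}\big(|(Du)^1_{ij}|+|(Du)^2_{ij}|\big)$, where $D$ is given by \eqref{FFD}. Once this is in hand, the problems $\min_{u\in X}\{\alpha F(u)+TV(u)\}$ and $\min_{u\in X}\{\alpha F(u)+\||Du|\|_1\}$ are literally the same minimization problem and hence have the same solution set, so it suffices to show that $u^*$ solves \eqref{model}.

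First I would record why $u^*$ solves \eqref{model}. The primal-dual relations \eqref{pd_relation} are precisely the first-order (saddle point) optimality conditions for the convex-concave problem \eqref{model_pd}: the first inclusion is stationarity in $u$, and the second, after rewriting $\langle u,\div\p\rangle_X=\langle \div^* u,\p\rangle_Y$, is stationarity in $\p$. Hence $(u^*,\p^*)$ is a saddle point of \eqref{model_pd}, and the $u$-component of any saddle point minimizes $u\mapsto \max_{\p\in Y}\{-\langle u,\div\p\rangle_X+\alpha F(u)-\chi_C(\p)\}$; since $C$ is symmetric, this inner maximum equals $\alpha F(u)+TV(u)=\J(u)$, so $u^*$ solves \eqref{model}. (Alternatively, since $\p^*$ solves the dual \eqref{model_dual}, one may invoke the primal-dual equivalence \eqref{abstract_pd}--\eqref{abstract_dual} under Assumption~\ref{Ass:equiv}/Assumption~\ref{Ass:equiv_general}.)

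The heart of the argument is the identity $TV(u)=\||Du|\|_1$. I would compute $\langle u,\div\p\rangle_X$ for $\p\in Y$ by exploiting the structure of the lowest order Raviart--Thomas space: on each pixel $T=T_{ij}$, a unit square, $\div\p|_T$ is constant and, by the divergence theorem, equals the sum of the outward normal fluxes across the four edges of $T$, i.e. $(\div\p)_{ij}=(p^{E}_{ij}-p^{W}_{ij})+(p^{N}_{ij}-p^{S}_{ij})$ in terms of the edge dofs. Since $u$ and $\div\p$ are piecewise constant and $|T|=1$, we get $\langle u,\div\p\rangle_X=\sum_{ij}(u)_{ij}(\div\p)_{ij}$; a summation by parts in each coordinate direction, using that the normal flux vanishes on $\partial\Omega$ because $\p\in\Hzdiv$ -- exactly the way $(Du)^1$, $(Du)^2$ are set to zero at $i=M$, $j=N$ in \eqref{FFD} -- rewrites this as $-\sum_{i,j}(Du)^1_{ij}\,q^1_{ij}-\sum_{i,j}(Du)^2_{ij}\,q^2_{ij}$, where $q^1_{ij}$, $q^2_{ij}$ range over the interior edge dofs of $\p$. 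The membership $\p\in C$ amounts precisely to $|q^1_{ij}|\le 1$ and $|q^2_{ij}|\le 1$ for every edge, so maximizing the last expression over $\p\in C$ yields $\sum_{ij}|(Du)^1_{ij}|+\sum_{ij}|(Du)^2_{ij}|=\||Du|\|_1$, which is $TV(u)$ by \eqref{dTV}.

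I expect the summation by parts to be the only genuinely delicate point: matching each Raviart--Thomas edge dof to the correct pair of adjacent pixels, keeping the orientation and sign conventions consistent, and checking that the homogeneous normal boundary condition built into $\Hzdiv$ lines up with the "set to zero at the last row/column" convention in \eqref{FFD}; everything else is bookkeeping. One further small care is needed to confirm that the per-dof box constraint in \eqref{C} (rather than a pointwise Euclidean bound $|\p(x)|\le1$) is what produces the \emph{anisotropic} form $(|Du|)_{ij}=|(Du)^1_{ij}|+|(Du)^2_{ij}|$ appearing in the statement, as opposed to an isotropic one.
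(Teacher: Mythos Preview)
Your proposal is correct and is precisely the argument the paper invokes by citing Theorem~2.3 of~\cite{LPP:2019}: first identify $TV(u)$ with the anisotropic finite-difference total variation $\||Du|\|_1$ via the edge-dof structure of $\mathcal{RT}_0$ and a summation by parts (the per-dof box constraint in~\eqref{C} yielding the $\ell^1$ rather than $\ell^2$ coupling of the two components), and then use the saddle-point optimality conditions~\eqref{pd_relation} to conclude that $u^*$ solves~\eqref{model}. The paper's own proof gives no further detail beyond the citation, so you have simply written out what is deferred there.
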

\begin{proof}
It is straightforward by the same argument as Theorem~2.3 of~\cite{LPP:2019}.
\qed\end{proof}

\begin{remark}
In Proposition~\ref{Prop:equiv}, $TV(u)$ equals to the finite difference anisotropic total variation $\| | Du | \|_1$ due to the structure of the constraint set~$C$ in~\eqref{C}.
On the other hand, one can make $TV(u)$ equal to the finite difference isotropic total variation by replacing~$C$ by an appropriate convex subset of~$Y$; see~\cite{LPP:2019} for details.
\end{remark}

\begin{remark}
\label{Rem:highRT}
We give a remark on the relation between the continuous problem~\eqref{model_old} and the discrete problem~\eqref{model}.
For simplicity, we assume that the resolution of the image is $N \times N$.
We fix the side length of $\Omega$ by~1 and introduce a side length parameter $h = 1/N$.
The space $X^h$ is defined in the same manner as in Sect.~\ref{Sec:Basic}.3.
Then the question we have is whether a solution $u^{h*} \in X^h$ of the minimization problem
\begin{equation*}
\min_{u^h \in X^h} \left\{ \alpha F(u^h ) + TV(u^h) \right\}
\end{equation*}
accumulates at a solution $u^* \in BV(\Omega)$ of~\eqref{model_old} in $BV(\Omega)$ as $h \rightarrow 0$.
However, since $TV(u^h)$ does not $\Gamma$-converge to the $BV$-seminorm in $BV(\Omega)$ as $h \rightarrow 0$~(see Example~4.1 of~\cite{Bartels:2012}), this is not true in general.
It means that the space $X^h$ may not be considered as a correct space for solving~\eqref{model_old}.
To avoid this uncomfortable situation, one may use higher order Raviart--Thomas elements to discretize the dual formulation~\cite{Bartels:2012,HHSNW:2018}.

Nevertheless, thanks to Proposition~\ref{Prop:equiv}, one can construct a sequence accumulating at $u^*$ in the $L^1 (\Omega)$-topology from $\{ u^{h*}\}_{h > 0}$.
For any $u^h \in X^h$, we have
\begin{equation}
\label{TVFD}
TV (u^h) = h^2 \left\| | \mathrm{D}^h \mathrm{dof}^h (u^h) | \right\|_1,
\end{equation}
where $\mathrm{dof}^h (u^h) \in \mathbb{R}^{N \times N}$ is the dofs of $u^h$ and $\mathrm{D}^h$:~$\mathbb{R}^{N \times N} \rightarrow \mathbb{R}^{N \times N} \times \mathbb{R}^{N \times N}$ is the forward finite difference operator defined similarly to~\eqref{FFD}.
As it can be shown without much difficulty the right hand side of~\eqref{TVFD} $\Gamma$-converges to the $BV$-seminorm in $L^1 (\Omega)$ as $h \rightarrow 0$~\cite{CLL:2011,WL:2011}, there exists an interpolation operator $I^h$:~$\mathbb{R}^{N \times N} \rightarrow L^1 (\Omega)$ such that $I^h \mathrm{dof}^h (u^{h*}) $ accumulates at $u^*$ in $L^1 (\Omega)$ as $h \rightarrow 0$.
We note that for $u^h \in X^h$, $I^h \mathrm{dof}^h (u^{h}) \not\in X^h$ in general.
\end{remark}

The model problem \eqref{model} occurs in various areas of mathematical image processing.
One of the typical examples is the image denoising problem.
In~\cite{ROF:1992}, authors proposed the well-known ROF model which consists of the $L^2$-fidelity term and the total variation regularizer.
In the ROF model, $F$ is given by
\begin{equation*}
F(u) = \frac{1}{2} \| u - f \|_{2, X}^2
\end{equation*}
and Assumptions~\ref{Ass:separable} and~\ref{Ass:simple} are satisfied with
\begin{eqnarray*}
F_s (u_s) &=& \frac{1}{2} \| u_s - f \|_{2, X_s}^2, \\
\prox_{\sigma F }(u) &=& \frac{u + \sigma f}{1 + \sigma} .
\end{eqnarray*}
In order to preserve the contrast of an image, the $TV$-$L^1$ model which uses the $L^1$-fidelity term was introduced in~\cite{CE:2005,Nikolova:2004}:
\begin{equation*}
F(u) =  \| u - f \|_{1, X}.
\end{equation*}
The local functionals $F_s$ and the proximity operator $\prox_{\sigma F}$ are readily obtained as follows:
\begin{eqnarray*}
F_s (u_s) &=&  \| u_s - f \|_{1, X_s}, \\
\prox_{\sigma F }(u) &=& f + \shr_{\Omega} (u - f, \sigma),
\end{eqnarray*}
where the elementwise shrinkage operator $\shr_{S}$:~$X \times \mathbb{R}_{>0} \rightarrow X$ on $S \subset \Omega$ is defined as follows~\cite{WYYZ:2008}:
\begin{equation}
\label{shrinkage}
\left( \shr_{S}(v , \sigma) \right)_T = \begin{cases} \max \left\{ |(v)_T| - \sigma , 0 \right\} \frac{(v)_T}{|(v)_T|} & \textrm{ if } T \subset S, \\
(v)_T & \textrm{ if } T \not\subset S,\end{cases}, \hspace{0.2cm} T \in \mathcal{T},
\end{equation}
with the convention $\frac{0}{|0|} = 0$.
For simplicity, $S$ is assumed to be a union of elements of~$\mathcal{T}$ in~\eqref{shrinkage}.

The models for the image denoising problem are easily extended to the image inpainting problem~\cite{CS:2002}.
Let $D \subset \Omega$ be the inpainting domain and $f \in X$ be the known part of an image.
We assume that $D$ does not cut through any elements in~$\mathcal{T}$.
We set $f = 0$ on $D$ for simplicity.
Also, let $A$:~$X \rightarrow X$ be the restriction operator onto $\Omega \setminus D$, that is, $Au = 0$ on $D$ for all $u \in X$.
Then, $F$ is given by
\begin{equation*}
F(u) = \frac{1}{2} \| Au - f \|_{2, X}^2 .
\end{equation*}
Note that Assumptions~\ref{Ass:separable} and~\ref{Ass:simple} are ensured with
\begin{eqnarray*}
F_s (u_s) &=& \frac{1}{2} \| A_s u_s - f \|_{2, X_s}^2, \\
\left( \prox_{\sigma F}(u) \right)_T &=& \begin{cases} \frac{(u)_T + \sigma (f)_T}{1 + \sigma} & \textrm{ if } T \subset \Omega \setminus D, \\ (u)_T & \textrm{ if } T \subset D, \end{cases}, \hspace{0.2cm} T \in T, 
\end{eqnarray*}
where $A_s u_s =  R_s AR_s^* u_s$ for $s= 1, ..., \mathcal{N}$.
One can easily check that $A = \bigoplus_{s=1}^{\mathcal{N}} A_s$.
Similarly to the denoising model, we may use the $L^1$ fidelity term instead of the $L^2$ fidelity term as follows:
\begin{equation*}
F(u) = \| Au - f \|_{1, X} .
\end{equation*}
In this case, $F_s$ and $\prox_{\sigma F}$ are given by
\begin{eqnarray*}
F_s (u_s) &=& \| A_s u_s - f \|_{1, X_s}, \\
\prox_{\sigma F}(u) &=& \shr_{\Omega \setminus D} (u-f, \sigma).  
\end{eqnarray*}

Another typical example is the image segmentation problem.
In~\cite{CEN:2006}, authors proposed a convex image segmentation model with the total variation regularizer as follows:
\begin{equation}
\label{CCV_old}
\min_{u \in X} \left\{ \alpha \Big( \left<u, (f - c_1)^2 \right>_X + \left< 1-u , (f - c_2)^2 \right>_X \Big) + \chi_{\left\{ 0\leq u \leq 1 \right\}}(u) + TV(u) \right\},
\end{equation}
where $f$ is a given image, $c_1$ and $c_2$ are predetermined intensity values.
Writing $g = (f - c_1)^2 - (f - c_2)^2$ in \eqref{CCV_old} yields the following simpler form:
\begin{equation}
\label{CCV}
\min_{u \in X} \left\{ \alpha \left< u, g \right>_X +  \chi_{\left\{ 0\leq u \leq 1 \right\}}(u) + TV(u) \right\} .
\end{equation}
Then, \eqref{CCV} is of the form~\eqref{model} with
\begin{equation*}
F(u) = \left< u, g \right>_X +  \chi_{\left\{ 0\leq u \leq 1 \right\}}(u),
\end{equation*}
and satisfies Assumptions~\ref{Ass:separable} and~\ref{Ass:simple} with
\begin{eqnarray*}
F_s (u_s) &=&  \left< u_s , g\right>_{X_s} + \chi_{\left\{ 0\leq u_s \leq 1 \right\}}(u_s) \\
\prox_{\sigma F }(u) &=& \proj_{\left\{ 0\leq \cdot \leq 1 \right\}}(u - \sigma g).
\end{eqnarray*}
Here, $\proj_{\left\{ 0\leq \cdot \leq 1 \right\}}$ can be computed pointwise like~\eqref{proj_C}.

We close this section by mentioning that the image deconvolution problem is not a case of~\eqref{model} in general.
For an operator $A$:~$X \rightarrow X$ defined by the matrix convolution with some matrix kernel,
$A \neq \bigoplus_{s=1}^{\mathcal{N}} R_s A R_s^*$ since the computation of $R_sA u$ needs values of $u$ in $\Omega_s$ as well as adjacent subdomains.
Consequently, Assumption~\ref{Ass:separable} cannot be satisfied for the deconvolution problem.

\section{Proposed method}
\label{Sec:Proposed}
In this section, we extend the primal-dual DDM for the ROF model introduced in~\cite{LPP:2019} to the more general model problem~\eqref{model}.
The continuity of a solution on the subdomain interfaces is imposed in the \textit{dual} sense, that is, it is imposed by the method of Lagrange multipliers.
As a result, we obtain an equivalent saddle point problem, which is solved by the first order primal-dual algorithm~\cite{CP:2011}.

We start the section by stating the following simple proposition, which means that
the Legendre--Fenchel conjugate is separable if the original functional is separable.

\begin{proposition}
\label{Prop:good}
Let $F$:~$X \rightarrow \bar{\mathbb{R}}$ be a proper, convex, lower semicontinuous functional satisfying Assumption~\ref{Ass:separable}.
Then, its Legendre--Fenchel conjugate $F^*$ also satisfies Assumption~\ref{Ass:separable}.
\end{proposition}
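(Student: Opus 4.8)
The plan is to compute $F^*$ directly from the definition of the Legendre--Fenchel conjugate, exploiting the orthogonal decomposition $X = \bigoplus_{s=1}^{\mathcal{N}} X_s$ and Assumption~\ref{Ass:separable}. First I would recall that, via the restriction operators $R_s$ and their extension-by-zero adjoints $R_s^*$, any $v^* \in X$ can be written uniquely as $v^* = \sum_{s=1}^{\mathcal{N}} R_s^* v_s^*$ with $v_s^* = R_s v^* \in X_s$, and similarly for the primal variable $v = \sum_s R_s^* v_s$. Since the decomposition is orthogonal with respect to $\langle \cdot, \cdot \rangle_X$, we have $\langle v, v^* \rangle_X = \sum_{s=1}^{\mathcal{N}} \langle v_s, v_s^* \rangle_{X_s}$.

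Next I would substitute into $F^*(v^*) = \sup_{v \in X}\{ \langle v, v^* \rangle_X - F(v)\}$ and use Assumption~\ref{Ass:separable}, $F(v) = \sum_s F_s(R_s v)$, to obtain
\begin{equation*}
F^*(v^*) = \sup_{v \in X} \sum_{s=1}^{\mathcal{N}} \left( \langle R_s v, v_s^* \rangle_{X_s} - F_s(R_s v) \right).
\end{equation*}
The key observation is that, because $X = \bigoplus_s X_s$, the supremum over $v \in X$ decouples into independent suprema over each component $v_s \in X_s$: as $v$ ranges over $X$, the tuple $(R_1 v, \dots, R_{\mathcal{N}} v)$ ranges over all of $X_1 \times \dots \times X_{\mathcal{N}}$. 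Hence the sup of the sum equals the sum of the sups, giving
\begin{equation*}
F^*(v^*) = \sum_{s=1}^{\mathcal{N}} \sup_{v_s \in X_s} \left\{ \langle v_s, v_s^* \rangle_{X_s} - F_s(v_s) \right\} = \sum_{s=1}^{\mathcal{N}} F_s^*(R_s v^*),
\end{equation*}
which is exactly the separability of $F^*$ with local conjugates $F_s^*$ playing the role of the local energy functionals. I would close by noting that each $F_s^*$ is automatically proper, convex, and lower semicontinuous as the conjugate of a proper convex lower semicontinuous functional (standard, e.g.\ from~\cite{Rockafellar:2015}), and that properness of the sum follows since $\dom F \neq \emptyset$ forces each $F_s$, hence each $F_s^*$, to be proper.

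I do not anticipate a genuine obstacle here — the statement is essentially the well-known fact that conjugation distributes over the direct (separable) sum of functionals. The only point requiring a little care is the interchange of the supremum with the finite sum, which is valid precisely because the variables $v_s$ are \emph{independent} under the direct-sum structure $X = \bigoplus_s X_s$ (as opposed to, say, a sum-constrained decomposition, where one would instead get an infimal convolution). Making that decoupling explicit via the $R_s$ is the heart of the argument; everything else is bookkeeping.
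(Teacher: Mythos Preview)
Your proposal is correct and follows essentially the same route as the paper's own proof: a direct computation of $F^*$ using the orthogonal decomposition $X=\bigoplus_s X_s$ and the separability assumption, then recognizing each summand as $F_s^*$. The paper's version is terser (it does not spell out the sup--sum interchange or the regularity of $F_s^*$), but the argument is the same.
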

\begin{proof}
For $u^* = \bigoplus_{s=1}^{\mathcal{N}} u_s^* \in X$, we have
\begin{align*}
F^*(u^*) &= \sup_{u \in X} \left\{ \left< u, u^* \right>_{X} - F(u) \right\} \\
&= \sum_{s=1}^{\mathcal{N}} \sup_{u_s \in X_s} \left\{ \left< u_s, u_s^* \right>_{X_s} - F_s(u_s) \right\} \\
&= \sum_{s=1}^{\mathcal{N}} F_s^* (u_s^*).
\end{align*}
Letting $(F^*)_s = F_s^*$ for $s=1, ..., \mathcal{N}$ completes the proof.
\qed\end{proof}

Thanks to Proposition~\ref{Prop:good}, we can transform the dual model problem~\eqref{model_dual} to an equivalent constrained minimization problem
\begin{equation}
\label{DD_constrained}
\min_{\tp \in \tY} \sum_{s=1}^{\mathcal{N}} \left\{ \frac{1}{\alpha} F_s^*(\div \tp_s)  + \chi_{\tC_s} (\tp_s) \right\} \hspace{0.5cm}
\textrm{subject to } B\tp = 0.
\end{equation}
In order to treat the continuity constraint $B\tp = 0$, the method of Lagrange multipliers for~\eqref{DD_constrained} yields the saddle point formulation
\begin{equation}
\label{DD}
\min_{\tp \in \tY} \max_{\lambda \in \R^{|\I_{\Gamma}|}} \L (\tp, \lambda),
\end{equation}
where
\begin{equation*}
\L (\tp, \lambda) = \sum_{s=1}^{\mathcal{N}} \left\{ \frac{1}{\alpha}F_s^*(\div \tp_s) + \chi_{\tC_s}(\tp_s) \right\} + \left< B\tp , \lambda \right>_{\R^{|\I_{\Gamma}|}}.
\end{equation*}
The following proposition summarizes the equivalence between the dual model problem~\eqref{model_dual} and the resulting saddle point problem~\eqref{DD}.

\begin{proposition}
\label{Prop:DD_equiv}
If $\p^* \in C$ is a solution of \eqref{model_dual},
then $\tp^* = \bigoplus_{s=1}^{\mathcal{N}} \p^* |_{\Omega_s} \in \tC$ is a primal solution of \eqref{DD}.
Conversely, if $\tp^*$ is a primal solution of \eqref{DD}, then $\tp^* \in \ker B$.
Hence $\tp^* \in C$ and $\tp^*$ is a solution of \eqref{model_dual}.
\end{proposition}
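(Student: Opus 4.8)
The plan is to transfer the dual model problem~\eqref{model_dual} to the inner minimization of the saddle point problem~\eqref{DD} through the natural isomorphism between $Y$ and $\ker B \subset \tY$, after observing that the maximization over $\lambda$ in~\eqref{DD} merely penalizes the interface constraint $B\tp = 0$ exactly. Indeed, $\max_{\lambda \in \R^{|\I_{\Gamma}|}} \langle B\tp, \lambda \rangle$ equals $0$ if $B\tp = 0$ and $+\infty$ otherwise, so $\max_{\lambda} \L(\tp, \lambda)$ coincides with the objective of~\eqref{DD_constrained} with the constraint absorbed as a $+\infty$ penalty; hence a primal solution of~\eqref{DD} is precisely a minimizer of~\eqref{DD_constrained}. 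It therefore remains to identify~\eqref{DD_constrained} with~\eqref{model_dual} along the correspondence $Y \ni \p \leftrightarrow \tp \in \ker B$ given by $\tp_s = \p|_{\Omega_s}$.

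Two facts make this identification precise. First, for $\tp \in \ker B$ the normal components of $\tp_s$ and $\tp_t$ agree across every interface $\Gamma_{st}$, so the associated lowest order Raviart--Thomas field $\p$ carries no distributional divergence on $\Gamma$, whence $(\div \p)|_{\Omega_s} = \div \tp_s$ for all $s$; combining this with Proposition~\ref{Prop:good} (which gives $(F^*)_s = F_s^*$) and Assumption~\ref{Ass:separable} applied to $F^*$ yields $\frac{1}{\alpha} F^*(\div \p) = \sum_{s=1}^{\N} \frac{1}{\alpha} F_s^*(\div \tp_s)$. Second, the degrees of freedom of $\tp_s$ are exactly those dofs of $\p$ attached to edges of $\overline{\Omega_s}$, so $\p \in C$ if and only if $\tp \in \tC$, i.e.\ $\chi_{C}(\p) = \sum_{s=1}^{\N} \chi_{\tC_s}(\tp_s)$. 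Consequently the objective of~\eqref{model_dual} at $\p \in Y$ equals the objective of~\eqref{DD_constrained} at the corresponding $\tp \in \ker B$, and the optimal values of~\eqref{model_dual} and~\eqref{DD} agree.

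Both assertions then follow. If $\p^* \in C$ solves~\eqref{model_dual}, then $\tp^* = \bigoplus_{s=1}^{\N} \p^*|_{\Omega_s}$ lies in $\ker B \cap \tC$ and, by the identification above, $\max_{\lambda} \L(\tp^*, \lambda)$ equals the common optimal value, so $\tp^*$ is a primal solution of~\eqref{DD}. Conversely, let $\tp^*$ be a primal solution of~\eqref{DD}. The common optimal value is finite (it equals that of~\eqref{model_dual}, which for every model treated here is at most $\frac{1}{\alpha}F^*(0) < \infty$ by testing $\p = 0 \in C$), so $\max_{\lambda} \L(\tp^*, \lambda) < \infty$, which forces $B\tp^* = 0$ and $\tp^* \in \tC$; writing $\p^* \in C$ for the element of $Y$ corresponding to $\tp^*$, the matching of objectives and feasible sets shows that $\p^*$ attains the minimum in~\eqref{model_dual}.

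The one step that is genuinely more than bookkeeping is the divergence compatibility $(\div \p)|_{\Omega_s} = \div \tp_s$ on $\ker B$; this is exactly where the tearing-and-interconnecting choice of the local spaces $\tY_s$ (which keep the normal dofs on $\Gamma \cap \partial \Omega_s$) and of the jump operator $B$ pays off, and it is the point I would write out in detail. The rest is the routine transport of a convex program along a linear isomorphism together with the standard exact-penalty reading of the Lagrangian, as in the ROF case of~\cite{LPP:2019}.
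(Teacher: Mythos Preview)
Your argument is correct. Note, however, that the paper states Proposition~\ref{Prop:DD_equiv} without proof: it is treated as a direct analogue of the corresponding equivalence in the ROF case~\cite{LPP:2019}, once Proposition~\ref{Prop:good} makes the separable structure of $F^*$ available. Your write-up supplies exactly the details the paper omits --- the exact-penalty reading of $\max_{\lambda}\langle B\tp,\lambda\rangle$, the transport of the objective along the isomorphism $Y\cong\ker B$, and the dof-wise matching $\p\in C\Leftrightarrow\tp\in\tC$ --- and is in line with how the result is obtained in~\cite{LPP:2019}. The only place to be slightly careful is the finiteness step in the converse direction: rather than invoking $F^*(0)<\infty$ ``for every model treated here,'' it is cleaner to observe that $\tp=\mathbf{0}\in\tC\cap\ker B$ is always feasible for~\eqref{DD_constrained}, so the common optimal value is not $+\infty$ whenever~\eqref{model_dual} admits a minimizer (which is the standing hypothesis of the proposition).
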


Now, we are ready to propose the main algorithm of this paper.
In the case of ROF model, recovering a primal solution $u^*$ from the computed dual solution $\p^*$
can be easily done by the primal-dual relation $u^* = f + \div \p^* / \alpha$.
However, in the general case, the primal solution may not be obtained as in the ROF case since the primal-dual relation~\eqref{pd_relation} does not always give an explicit formula for $u^*$.
Instead, we consider an algorithm to obtain a primal solution $u^*$ and a dual solution $\p^*$ simultaneously.
We begin with the primal-dual algorithm~\cite{CP:2011} applied to~\eqref{DD}.
In each iteration of the primal-dual algorithm, we need to solve the local problems of the following form:
\begin{equation}
\label{DD_local_dual}
\min_{\tp \in \tY} \sum_{s=1}^{\mathcal{N}} \left\{ \frac{1}{\alpha}F_s^* (\div \tp_s ) + \chi_{\tC_s}(\tp_s) + \frac{1}{2\tau} \| \tp_s - \hp_s^{(n+1)} \|_{2, \tY_s}^2 \right\}
\end{equation}
for $\hp_s^{(n+1)} \in \tY_s$.
The term $\frac{1}{2\tau} \| \tp_s - \hp_s^{(n+1)} \|_{2, \tY_s}^2$ appears because we compute proximal descent/ascent in each step of the primal-dual algorithm.
To obtain a primal solution $u^{(n+1)}$ and a dual solution $\tp^{(n+1)}$ simultaneously, we replace \eqref{DD_local_dual} by the following primal-dual formulation of \eqref{DD_local_dual}:
\begin{equation}
\label{DD_local_pd}
\min_{u \in X} \max_{\tp \in \tY} \sum_{s=1}^{\mathcal{N}}
\left\{ - \left< u_s, \div \tp_s \right>_{X_s} + \alpha F_s (u_s) - \chi_{\tC_s}(\tp_s) - \frac{1}{2\tau} \| \tp_s - \hp_s^{(n+1)} \|_{2, \tY_s}^2 \right\}.
\end{equation}
There is another advantage to solve~\eqref{DD_local_pd} instead of \eqref{DD_local_dual}.
Differently from the ROF model, it is sometimes cumbersome to get an explicit formula for $F_s^*$,
which makes the design of a local solver difficult.
We note that an explicit formula for $F^*$ in the case of $F(u) = \| Au - f \|_{1,X}$ with nonsingular $AA^*$ is given in~\cite{DHN:2009}, but it is somewhat complicated.
However, considering \eqref{DD_local_pd} does not require an explicit formula for $F^*$.

Similarly to the ROF case, the solution pair $(u^{(n+1)}, \tp^{(n+1)})$ can be constructed by assembling the local solution pairs $(u_s^{(n+1)}, \tp_s^{(n+1)})$ in the subdomains,
i.e., $u^{(n+1)} = \bigoplus_{s=1}^{\mathcal{N}} u_s^{(n+1)}$ and $\tp^{(n+1)} = \bigoplus_{s=1}^{\mathcal{N}} \tp_s^{(n+1)}$.
The local solution pair $(u_s^{(n+1)}, \tp_s^{(n+1)})$ in the subdomain $\Omega_s$ is obtained by solving the local problem
\begin{equation}
\label{DD_local}
\min_{u_s \in X_s} \max_{\tp_s \in \tY_s}
\left\{ - \left< u_s, \div \tp_s \right>_{X_s} + \alpha F_s (u_s) - \chi_{\tC_s}(\tp_s) - \frac{1}{2\tau} \| \tp_s - \hp_s^{(n+1)} \|_{2, \tY_s}^2 \right\}
\end{equation}
and each local problem can be solved in parallel.
We will address how to solve~\eqref{DD_local} in Sect.~\ref{Sec:Applications} in detail.

In summary, the proposed algorithm is presented in Algorithm~\ref{Alg:DD}.
As in the ROF case, the range $\tau \sigma < 1/2$ comes from Proposition~\ref{Prop:B_norm}.

\begin{algorithm}[]
\caption{Primal-dual domain decomposition method for the model problem~\eqref{model}}
\begin{algorithmic}[]
\label{Alg:DD}
\STATE Choose $L > 2$, $\tau, \sigma > 0$ with $\tau \sigma = \frac{1}{L}$.
Let $\tp^{(0)} = \tp^{(-1)} = \0$ and $\lambda^{(0)} = 0$.
\FOR{$n=0,1,2,...$}
\STATE $\displaystyle \lambda^{(n+1)} = \lambda^{(n)} + \sigma B \left( 2\tp^{(n)} - \tp^{(n-1)} \right)$
\STATE $\displaystyle \hp^{(n+1)} = \tp^{(n)} - \tau B^* \lambda^{(n+1)}$
\FOR{$s=1,2,...,\mathcal{N}$ \textbf{in parallel}}
\STATE $\displaystyle (u_s^{(n+1)}, \tp_s^{(n+1)}) \in \arg \min_{u_s \in X_s} \max_{\tp_s \in \tY_s}
\bigg\{ - \left< u_s, \div \tp_s \right>_{X_s} + \alpha F_s (u_s)$ \\
\hspace{5.5cm} $\displaystyle - \chi_{\tC_s}(\tp_s) - \frac{1}{2\tau} \| \tp_s - \hp^{(n+1)}|_{\Omega_s} \|_{2, \tY_s}^2 \bigg\}$
\ENDFOR
\STATE $\displaystyle u^{(n+1)} = \bigoplus_{s=1}^{\mathcal{N}} u_s^{(n+1)}$
\STATE $\displaystyle \tp^{(n+1)} = \bigoplus_{s=1}^{\mathcal{N}} \tp_s^{(n+1)}$
\ENDFOR
\end{algorithmic}
\end{algorithm}

Next, we analyze convergence of the proposed method.
Since the sequence $\{ ( \tp^{(n)}, \lambda^{(n)} ) \}$ generated by Algorithm~\ref{Alg:DD} agrees with the one generated by the standard primal-dual algorithm, $O(1/n)$ ergodic convergence is guaranteed.

\begin{theorem}
\label{Thm:DD}
Let $\{ ( \tp^{(n)}, \lambda^{(n)} ) \}$ be the sequence generated by Algorithm~\ref{Alg:DD}.
Then, $(\tp^{(n)}, \lambda^{(n)})$ converges to a saddle point $(\tp^* , \lambda^*) $ of~\eqref{DD} and it satisfies
\begin{equation*}
\L (\tp_n , \lambda) - \L (\tp, \lambda_n) \leq \frac{1}{n} \left( \frac{1}{\tau} \| \tp - \tp^{(0)} \|_{2,\tY}^2 + \frac{1}{\sigma} \| \lambda - \lambda^{(0)} \|_{2, \mathbb{R}^{|\mathcal{I}_{\Gamma}|}}^2\right)
\end{equation*}
for any $\tp \in \tY$ and $\lambda \in \mathbb{R}^{|\mathcal{I}_{\Gamma}|}$, where $\tp_n = \frac{1}{n} \sum_{i=1}^n \tp^{(i)}$ and $\lambda_n = \frac{1}{n} \sum_{i=1}^n \lambda^{(i)}$.
\end{theorem}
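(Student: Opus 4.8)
The plan is to recognize that Algorithm~\ref{Alg:DD} is, up to the inner primal-dual substitution, nothing but the Chambolle--Pock primal-dual algorithm of~\cite{CP:2011} applied to the saddle point problem~\eqref{DD}, so that the convergence theory of~\cite{CP:2011} applies verbatim. First I would make the identification precise: set $K = B$: $\tY \to \R^{|\I_{\Gamma}|}$, let the ``primal'' functional be $\sum_{s=1}^{\mathcal{N}} \big( \frac{1}{\alpha}F_s^*(\div \tp_s) + \chi_{\tC_s}(\tp_s)\big)$ as a function of $\tp$, and let the ``dual'' functional be the zero function of $\lambda$ (whose conjugate is $\chi_{\{0\}}$), so that~\eqref{DD} reads $\min_{\tp}\max_{\lambda}\{\langle B\tp,\lambda\rangle + \Phi(\tp) - 0\}$ with $\Phi(\tp)=\sum_s(\tfrac1\alpha F_s^*(\div\tp_s)+\chi_{\tC_s}(\tp_s))$. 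With step sizes $\tau,\sigma>0$ satisfying $\tau\sigma\|B\|_2^2<1$, Theorem~1 of~\cite{CP:2011} gives both convergence of the iterates to a saddle point and the stated $O(1/n)$ ergodic bound on the primal-dual gap. Since Proposition~\ref{Prop:B_norm} gives $\|B\|_2^2\le 2$, the choice $\tau\sigma=1/L$ with $L>2$ in Algorithm~\ref{Alg:DD} indeed secures $\tau\sigma\|B\|_2^2 \le 2/L < 1$, which is exactly the hypothesis needed.

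The second step is to verify that the $\tY$- and $\lambda$-updates produced by Algorithm~\ref{Alg:DD} coincide with those of the abstract primal-dual algorithm. The $\lambda$-update $\lambda^{(n+1)}=\lambda^{(n)}+\sigma B(2\tp^{(n)}-\tp^{(n-1)})$ is exactly the dual proximal step for $G^*=\chi_{\{0\}}$, since $\prox_{\sigma \chi_{\{0\}}^*}$ applied to the extrapolated argument is the identity (the conjugate of $\chi_{\{0\}}$ is $0$, whose prox is the identity map). For the $\tp$-update, the abstract algorithm prescribes $\tp^{(n+1)} = \prox_{\tau\Phi}\big(\tp^{(n)} - \tau B^*\lambda^{(n+1)}\big) = \prox_{\tau\Phi}(\hp^{(n+1)})$, which by separability of $\Phi$ over the subdomains decouples into the $\mathcal{N}$ local problems $\min_{\tp_s\in\tY_s}\{\tfrac1\alpha F_s^*(\div\tp_s)+\chi_{\tC_s}(\tp_s)+\tfrac1{2\tau}\|\tp_s-\hp_s^{(n+1)}\|_{2,\tY_s}^2\}$; this is precisely~\eqref{DD_local_dual}. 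Here I would invoke the discussion already in the text: the inner loop of Algorithm~\ref{Alg:DD} solves the primal-dual reformulation~\eqref{DD_local} of this local problem, so that the $\tp_s^{(n+1)}$ it returns is the same minimizer. Hence the sequence $\{(\tp^{(n)},\lambda^{(n)})\}$ generated by Algorithm~\ref{Alg:DD} is identical to the one generated by the Chambolle--Pock algorithm for~\eqref{DD}, and the $u_s^{(n+1)}$ components are merely a byproduct of the chosen local solver and do not affect this identity.

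The third step is bookkeeping on the gap estimate. Theorem~1 of~\cite{CP:2011} yields, for the ergodic averages, a bound of the form $\L(\tp_n,\lambda) - \L(\tp,\lambda_n) \le \frac{1}{n}\big(\frac{1}{2\tau}\|\tp-\tp^{(0)}\|^2 + \frac{1}{2\sigma}\|\lambda-\lambda^{(0)}\|^2\big)$ in their normalization; adjusting for the factor-of-two convention in the statement here (and noting $\tp^{(0)}=\0$, $\lambda^{(0)}=0$ could be used to simplify, though the statement keeps them general) gives exactly the displayed inequality. Convergence of $(\tp^{(n)},\lambda^{(n)})$ to an actual saddle point $(\tp^*,\lambda^*)$ of~\eqref{DD} — rather than merely convergence of the ergodic gap — also follows from~\cite{CP:2011}, since the problem is finite-dimensional and a saddle point exists (its existence being guaranteed by Proposition~\ref{Prop:DD_equiv} together with the solvability of~\eqref{model_dual} under Assumption~\ref{Ass:equiv}).

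The main obstacle is not analytic but one of careful identification: one must be scrupulous that the extrapolation step uses $2\tp^{(n)}-\tp^{(n-1)}$ (over-relaxation in the primal variable $\tp$) and that the roles of ``primal'' and ``dual'' in the Chambolle--Pock framework are assigned so that $\tp$ is the variable that is over-relaxed and $\lambda$ is the one updated first — this matches the order of operations in Algorithm~\ref{Alg:DD} ($\lambda$ then $\hp$ then $\tp$). Once this matching is pinned down, nothing further is needed beyond citing~\cite{CP:2011}; the proof in the paper can legitimately be a short one.
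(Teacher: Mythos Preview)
Your proposal is correct and follows exactly the paper's approach: the paper too simply observes that the $(\tp^{(n)},\lambda^{(n)})$ iterates of Algorithm~\ref{Alg:DD} coincide with those of the standard Chambolle--Pock primal-dual algorithm applied to~\eqref{DD}, notes that the primal functional $\sum_{s}\big(\tfrac1\alpha F_s^*(\div\tp_s)+\chi_{\tC_s}(\tp_s)\big)$ is convex, and then cites the convergence theorem (Theorem~5.1 of~\cite{CP:2016} rather than Theorem~1 of~\cite{CP:2011}, but the content is the same). Your write-up is more detailed in spelling out the identification and the step-size condition, but the argument is identical.
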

\begin{proof}
Since the term
$$\sum_{s=1}^{\mathcal{N}} \left\{ \frac{1}{\alpha} F_s^* (\div \tp_s) + \chi_{\tC_s}(\tp_s) \right\}$$
in \eqref{DD} is convex,
by Theorem~5.1 in~\cite{CP:2016}, we get the desired result.
\qed\end{proof}

By Theorem~\ref{Thm:DD}, we ensure the convergence of the sequences $\{ \tp^{(n)} \}$ and $\{ \lambda^{(n)} \}$.
On the other hand, since the sequence~$\{ u^{(n)} \}$ is a kind of byproduct of the primal-dual algorithm, the convergence theory for the primal-dual algorithm developed in~\cite{CP:2011,CP:2016} does not ensure global convergence of~$\{ u^{(n)} \}$. 
Thus, we will prove that it tends to a solution of~\eqref{model}.
For the sake of convenience, we rewrite~\eqref{DD_local_pd} as the following more compact form:
\begin{equation}
\label{DD_local_new}
\min_{u \in X} \max_{\tp \in \tY} \left\{ -\left< u, \tdiv \tp \right>_{X} + \alpha F(u) - \chi_{\tC} (\tp) - \frac{1}{2\tau} \| \tp - \hp^{(n+1)} \|_{2, \tY}^2 \right\} ,
\end{equation}
where $\tdiv$: $\tY \rightarrow X$ is defined as $\tdiv \tp = \bigoplus_{s=1}^{\mathcal{N}} \div \tp_s$.
Note that if $\tp \in \ker B$, then $\tp \in Y$ so that $\tdiv \tp = \div \tp$.
Furthermore, since $\tY$ has dofs on $\Gamma$, we have $\ker \tdiv^* = \left\{ 0 \right\}$.
Then, we readily see that $u^{(n+1)}$ is a solution of the minimization problem
\begin{equation*}
\min_{u \in X} \left\{ \alpha F(u) + G_{n+1}(-\tdiv^* u) \right\},
\end{equation*}
where $G_{n}$ is defined as
\begin{equation*}
G_{n}(\tq) = \max_{\tp \in \tY} \left\{ \left< \tp, \tq \right>_{\tY} - \chi_{\tC} (\tp) - \frac{1}{2\tau}  \| \tp - \hp^{(n)} \|_{2, \tY}^2 \right\},
\end{equation*}
the Legendre--Fenchel conjugate of
\begin{equation*}
\chi_{\tC} (\tp) + \frac{1}{2\tau} \| \tp - \hp^{(n)} \|_{2, \tY}^2 .
\end{equation*}
At first, we verify the boundedness of $\left\{ G_n \right\}$.

\begin{lemma}
\label{Lem:G_bd}
There exist a finite functional $\overline{G}$\emph{:} $\tY \rightarrow \R$ and a coercive functional $\underbar{G}$\emph{:} $\tY \rightarrow \R$ such that
\begin{equation*}
\underline{G} \leq G_{n} \leq \overline{G} \hspace{0.5cm} \forall n \in \mathbb{Z}_{>0}.
\end{equation*}
\end{lemma}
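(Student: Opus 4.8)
The plan is to produce the bounds $\underline{G}$ and $\overline{G}$ explicitly from the definition of $G_n$ as a conjugate, using the fact that the sequence $\hp^{(n)}$, though varying, stays in a controllable region. Recall that
$$
G_n(\tq) = \max_{\tp \in \tY} \left\{ \left< \tp, \tq \right>_{\tY} - \chi_{\tC}(\tp) - \frac{1}{2\tau} \| \tp - \hp^{(n)} \|_{2,\tY}^2 \right\},
$$
so the maximization is effectively over the bounded set $\tC$. First I would obtain the \emph{upper} bound: since $\tC$ is bounded (each dof has modulus at most $1$), for any $\tp \in \tC$ we have $\| \tp \|_{2,\tY} \le \sqrt{|\tilde{\I}|}$ where $|\tilde{\I}| = \sum_s |\tilde{\I}_s|$, and dropping the nonnegative term $\frac{1}{2\tau}\|\tp - \hp^{(n)}\|^2$ only increases the supremum. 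Hence
$$
G_n(\tq) \le \sup_{\tp \in \tC} \left< \tp, \tq \right>_{\tY} \le \sqrt{|\tilde{\I}|}\, \| \tq \|_{2,\tY} =: \overline{G}(\tq),
$$
which is a finite (indeed continuous, positively homogeneous) functional on $\tY$, independent of $n$. This handles the upper bound with no reference to the behaviour of $\hp^{(n)}$ at all.

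For the \emph{lower} bound I would argue that $\hp^{(n)}$ is bounded uniformly in $n$, so that a fixed feasible choice of $\tp$ gives a coercive lower estimate. From Theorem~\ref{Thm:DD} the sequence $\{(\tp^{(n)},\lambda^{(n)})\}$ converges, hence is bounded; and $\hp^{(n+1)} = \tp^{(n)} - \tau B^* \lambda^{(n+1)}$, with $B$ (hence $B^*$) bounded by Proposition~\ref{Prop:B_norm}, so there is a constant $R>0$ with $\| \hp^{(n)} \|_{2,\tY} \le R$ for all $n$. Now fix the particular feasible point $\tp = \0 \in \tC$ in the definition of $G_n$; this yields
$$
G_n(\tq) \ge \left< \0, \tq \right>_{\tY} - \chi_{\tC}(\0) - \frac{1}{2\tau} \| \0 - \hp^{(n)} \|_{2,\tY}^2 = -\frac{1}{2\tau} \| \hp^{(n)} \|_{2,\tY}^2 \ge -\frac{R^2}{2\tau}.
$$
That is a uniform lower \emph{constant}, but the lemma asks for a \emph{coercive} $\underline{G}$. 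To upgrade, I would instead plug in $\tp = \proj_{\tC}(\varepsilon \tq)$ for a suitably small fixed $\varepsilon>0$ (small enough that $\varepsilon\tq \in \tC$ whenever $\|\tq\|$ is moderate, and in general using the explicit projection formula), obtaining a term linear in $\|\tq\|$ minus a bounded remainder; concretely one gets an estimate of the form $G_n(\tq) \ge c_1 \min\{\|\tq\|_{2,\tY}^2, \|\tq\|_{2,\tY}\} - c_2$ with $c_1,c_2>0$ independent of $n$, which I would package as a coercive functional $\underline{G}$. Even more cleanly: since $\tC$ contains a ball $\{\tp : \|\tp\|_{2,\tY}\le \rho\}$ for some $\rho>0$ (take $\rho$ the reciprocal of $\sqrt{\max_i 1}$, i.e. $\rho$ = dof-wise bound, which here is just $\rho=1$ componentwise giving a Euclidean ball of radius $1$), maximizing $\langle\tp,\tq\rangle - \frac{1}{2\tau}\|\tp-\hp^{(n)}\|^2$ over that ball and using $\|\hp^{(n)}\|\le R$ produces a lower bound growing like $\rho\|\tq\|_{2,\tY} - (\text{const})$ once $\|\tq\|$ is large; I would define $\underline{G}(\tq)$ to be this expression (a continuous coercive functional), and verify $\underline{G}\le G_n$ for all $n$ directly from the feasibility of the chosen $\tp$.

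The main obstacle, and the only nonroutine point, is obtaining \emph{coercivity} rather than mere boundedness below; this is exactly where the uniform bound $\|\hp^{(n)}\|_{2,\tY}\le R$ — itself a consequence of the convergence in Theorem~\ref{Thm:DD} together with Proposition~\ref{Prop:B_norm} — must be invoked, and where one has to choose the test point $\tp$ inside $\tC$ as a function of $\tq$ (scaled toward $\tq$) so that the inner product term dominates for large $\|\tq\|$. Everything else — the explicit $\overline{G}$, the algebra of completing the square, the use of the closed-form projection onto $\tC$ — is straightforward bookkeeping. I would close the proof by recording the two displayed chains $\underline{G}\le G_n$ and $G_n\le\overline{G}$ and noting both bounding functionals are $n$-independent.
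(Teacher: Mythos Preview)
Your proposal is correct and follows essentially the same strategy as the paper: drop the nonnegative quadratic term for the upper bound, use the convergence from Theorem~\ref{Thm:DD} to get a uniform bound $\|\hp^{(n)}\|_{2,\tY}\le R$, and then plug a test point in $\tC$ pointing in the direction of $\tq$ to extract a coercive lower bound.

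The only substantive difference is the choice of test point and the resulting norm. The paper uses the dof-wise sign vector $\tq/|\tq|\in\tC$ for \emph{both} bounds, which realizes the exact support function $\sup_{\tp\in\tC}\langle\tp,\tq\rangle=\|\tq\|_{1,\tY}$; this gives the clean pair $\overline{G}(\tq)=\|\tq\|_{1,\tY}$ and $\underline{G}(\tq)=\|\tq\|_{1,\tY}-\tfrac{1}{\tau}\bigl(\sum_s|\tilde{\I}_s|+L\bigr)$ in one stroke, with no need for the detour through $\tp=\0$ or the auxiliary $\varepsilon$-scaling and $\min\{\|\tq\|_2^2,\|\tq\|_2\}$ form. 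Your route via the Euclidean ball $\{\|\tp\|_{2,\tY}\le 1\}\subset\tC$ and the test point $\tq/\|\tq\|_{2,\tY}$ works just as well (yielding $\underline{G}(\tq)\ge \|\tq\|_{2,\tY}-\tfrac{1}{2\tau}(1+R)^2$), but the paper's dof-wise choice is tighter and avoids the somewhat garbled discussion of $\rho$ and the projection formula. Either way, both arguments hinge on the same nontrivial ingredient you correctly identified: coercivity of $\underline{G}$ requires the uniform bound on $\hp^{(n)}$, which in turn rests on Theorem~\ref{Thm:DD}.
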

\begin{proof}
We first see that for any $\tq \in \tY$, we have $\frac{\tq}{|\tq|} \in \tC$, where $|\tq|$ is the dof-wise absolute value of $\tq$ and the division is done dof-wise with convention $\frac{0}{|0|} = 0$.
An upper bound of $\left\{ G_n \right\}$ is obtained as follows:
\begin{align*}
G_n (\tq) &= \max_{\tp \in \tC} \left\{ \left< \tp , \tq \right>_{\tY} - \frac{1}{2\tau} \| \tp - \hp^{(n)} \|_{2, \tY}^2\right\} \\
&\leq  \max_{\tp \in \tC}  \left< \tp , \tq \right>_{\tY} \\
&= \left< \frac{\tq}{|\tq|}, \tq \right>_{\tY} = \|  \tq  \|_{1, \tY}.
\end{align*}
Take $\overline{G}(\tq) = \| \tq \|_{1, \tY}$, which is clearly finite.

Now, we find a lower bound of $\left\{ G_n \right\}$.
Note that $\hp^{(n)} = \tp^{(n-1)} - \tau B^* \lambda^{(n)}$ in Algorithm~\ref{Alg:DD}.
Since $\{ \tp^{(n)} \}$ and $\{ \lambda^{(n)} \}$ are convergent by Theorem~\ref{Thm:DD},
$\{ \hp^{(n)} \}$ is also convergent.
Hence, $\{ \hp^{(n)} \}$ is bounded, i.e.\ there exists a constant $L > 0$ such that
\begin{equation*}
\| \hp^{(n)} \|_{2, \tY}^2 \leq L \hspace{0.5cm} \forall n \in \mathbb{Z}_{>0}.
\end{equation*}
Then, we have
\begin{align*}
G_n (\tq) &= \max_{\tp \in \tC} \left\{ \left< \tp , \tq \right>_{\tY} - \frac{1}{2\tau} \| \tp - \hp^{(n)} \|_{2, \tY}^2\right\} \\
&\geq  \max_{\tp \in \tC} \left\{ \left< \tp , \tq \right>_{\tY} - \frac{1}{\tau} \left( \| \tp \|_{2, \tY}^2 + \| \hp^{(n)} \|_{2, \tY}^2 \right) \right\} \\
&\geq  \max_{\tp \in \tC} \left\{ \left< \tp , \tq \right>_{\tY} - \frac{1}{\tau} \left( \| \tp \|_{2, \tY}^2 + L \right) \right\} \\
&\geq \left< \frac{\tq}{|\tq|} , \tq \right>_{\tY} - \frac{1}{\tau} \left( \left\| \frac{\tq}{|\tq|} \right\|_{2, \tY}^2 + L \right) \\
&\geq \| \tq \|_{1, \tY} - \frac{1}{\tau} \left( \sum_{s=1}^{\mathcal{N}} |\tilde{\I}_s| + L \right).
\end{align*}
Note that $\tilde{\I}_s$ denotes the set of indices of the basis functions for $\tY_s$ and the value of $\sum_{s=1}^{\mathcal{N}} |\tilde{\I}_s|$ depends on the image size and the number of subdomains $\mathcal{N}$.
Take
\begin{equation*}
\underline{G}(\tq) =  \| \tq \|_{1, \tY} - \frac{1}{\tau} \left( \sum_{s=1}^{\mathcal{N}} |\tilde{\I}_s| + L \right),
\end{equation*}
which is coercive due to the term $ \| \tq \|_{1, \tY}$.
\qed\end{proof}

Next lemma provides a criterion for equi-coercivity of a collection of functionals.

\begin{lemma}
\label{Lem:coercive}
Let $\left\{ F_i \right\}$,~$i=1,2,\dots$, be a collection of functionals from $X$ into $\bar{\mathbb{R}}$.
If there exists a coercive functional $F_0$\emph{:} $X \rightarrow \bar{\mathbb{R}}$ such that $F_i \geq F_0$ for all $i$,
then $\left\{ F_i \right\}$ is equi-coercive,
that is, for every $t \geq 0$, there exists a compact subset $K_t$ of $X$ such that
\begin{equation*}
\left\{ u \in X : F_i (u) \leq t \right\} \subset K_t  \hspace{0.5cm} \textrm{for all } i=1,2,\dots.
\end{equation*}
\end{lemma}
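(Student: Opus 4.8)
The plan is to work directly with sublevel sets and exploit the coercivity of the single majorant $F_0$. Fix $t \geq 0$. First I would recall that a functional $F_0 \colon X \rightarrow \bar{\mathbb{R}}$ being coercive means that its sublevel set $S_t := \{ u \in X : F_0(u) \leq t \}$ is bounded for every $t$; combined with lower semicontinuity (which is implicit in the way coercivity is used in $\Gamma$-convergence arguments, and which I would state explicitly as a hypothesis if needed, or else simply pass to the closure), $S_t$ is a closed and bounded subset of the finite-dimensional space $X$, hence compact by Heine--Borel. If one does not wish to assume lower semicontinuity of $F_0$, one takes instead $K_t := \overline{S_t}$, the closure of the sublevel set, which is still compact because $S_t$ is bounded.

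The key step is then the pointwise inequality $F_i \geq F_0$: for any index $i$ and any $u \in X$ with $F_i(u) \leq t$, we immediately get $F_0(u) \leq F_i(u) \leq t$, so $u \in S_t \subset K_t$. Since this holds for every $i$, we obtain
\begin{equation*}
\left\{ u \in X : F_i(u) \leq t \right\} \subset K_t \hspace{0.5cm} \textrm{for all } i = 1, 2, \dots,
\end{equation*}
with $K_t$ independent of $i$, which is exactly equi-coercivity. I would present this as the whole argument: choose $K_t$ to be (the closure of) the $t$-sublevel set of the common minorant $F_0$, and the inclusion is a one-line consequence of monotonicity.

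The only real obstacle is the mild topological bookkeeping around whether $K_t$ is genuinely compact, i.e.\ whether boundedness of $S_t$ suffices. Because $X$ is finite-dimensional, Heine--Borel applies and boundedness plus closedness gives compactness; taking $K_t = \overline{S_t}$ sidesteps any worry about $F_0$ not being closed, at the cost of noting that the inclusion $S_t \subset \overline{S_t}$ is trivial. In the intended application (Lemma~\ref{Lem:G_bd}), $F_0 = \underline{G}$ is continuous and coercive on the finite-dimensional space $\tY$, so $\overline{S_t} = S_t$ and no subtlety arises. I would therefore keep the proof to two or three sentences: invoke coercivity of $F_0$ to produce the compact set $K_t := \overline{\{ u \in X : F_0(u) \leq t \}}$, then note $F_i(u) \leq t \Rightarrow F_0(u) \leq t \Rightarrow u \in K_t$.
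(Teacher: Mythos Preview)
Your argument is correct and essentially identical to the paper's: both proofs take $K_t$ to be a compact set containing the $t$-sublevel set of $F_0$ (you construct it explicitly as $\overline{\{u : F_0(u) \leq t\}}$ via Heine--Borel, whereas the paper simply asserts its existence from coercivity), and then both observe that $F_i \geq F_0$ forces the $t$-sublevel set of each $F_i$ into that of $F_0$. Your additional bookkeeping around closedness and finite-dimensionality is a harmless elaboration of what the paper leaves implicit.
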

\begin{proof}
Let $t \geq 0$.
Since $F_0$ is coercive, there exists a compact subset $K_t$ of $X$ such that
\begin{equation*}
\left\{ u \in X : F_0 (u) \leq t \right\} \subset K_t.
\end{equation*}
On the other hand, for every $i$, $F_i \geq F_0$ implies that
\begin{equation*}
\left\{ u \in X : F_i (u) \leq t \right\} \subset \left\{ u \in X : F_0 (u) \leq t \right\}.
\end{equation*}
Therefore, $\left\{ F_i \right\}$ is equi-coercive.
\qed\end{proof}

Now, with the help of the lemmas above, we state the main theorem, which ensures that the sequence $\{ u^{(n)} \}$ generated by Algorithm~\ref{Alg:DD} approaches to solutions of \eqref{model}.

\begin{theorem}
\label{Thm:DD_u}
Let $\{ u^{(n)} \}$ be the sequence generated by Algorithm~\ref{Alg:DD}.
Then, $\{ u^{(n)} \}$ is bounded and every limit point of $\{ u^{(n)} \}$ is a solution of \eqref{model}.
\end{theorem}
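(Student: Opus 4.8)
The plan is to establish convergence of $\{u^{(n)}\}$ via a $\Gamma$-convergence argument coupled with equi-coercivity. Recall from the discussion preceding Lemma~\ref{Lem:G_bd} that $u^{(n+1)}$ is a minimizer of the functional $\J_{n+1}(u) := \alpha F(u) + G_{n+1}(-\tdiv^* u)$ on $X$, and that the target functional $\J$ in~\eqref{model} can be written as $\J(u) = \alpha F(u) + G_\infty(-\tdiv^* u)$, where $G_\infty$ is the Legendre--Fenchel conjugate of $\chi_{\tC}(\tp) + \tfrac{1}{2\tau}\|\tp - \hp^*\|_{2,\tY}^2$ with $\hp^* = \lim_n \hp^{(n)} = \tp^* - \tau B^*\lambda^*$; here one must first check that $G_\infty(-\tdiv^* u) = TV(u)$ for $u \in X$, which follows because at a saddle point $\hp^*$ is feasible (indeed $\tp^* \in \tC \cap \ker B \cong C$) and the quadratic penalty vanishes along the relevant maximizing direction, reducing $G_\infty(-\tdiv^* u)$ to $\max_{\tp \in \tC}\langle \tp, -\tdiv^* u\rangle = \max_{\p \in C}\langle \p, \div^*(-u)\rangle$... wait, more carefully, to $\max_{\p\in C}\langle u,\div\p\rangle = TV(u)$ by~\eqref{dTV}.

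First I would prove boundedness of $\{u^{(n)}\}$. By Lemma~\ref{Lem:G_bd} we have $G_n \geq \underline{G}$ for all $n$, and $\underline{G}$ is coercive on $\tY$; since $\ker\tdiv^* = \{0\}$, the map $u \mapsto \underline{G}(-\tdiv^* u)$ is coercive on $X$, hence so is $u \mapsto \alpha F(u) + \underline{G}(-\tdiv^* u)$ after noting $F$ is bounded below (being proper, convex, lsc on a finite-dimensional space, $F$ is bounded below on bounded sets; more directly, $\alpha F(u) + \underline G(-\tdiv^* u) \to \infty$ as $\|u\|\to\infty$ using that the $\|\cdot\|_{1,\tY}$ term in $\underline G$ dominates). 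Applying Lemma~\ref{Lem:coercive} to the family $\{\J_n\}$ — which all exceed the fixed coercive functional $u\mapsto \alpha F(u) + \underline G(-\tdiv^* u)$ — we get a compact $K$ containing all sublevel sets $\{\J_n \le t\}$ uniformly in $n$; since $u^{(n+1)}$ minimizes $\J_{n+1}$ and $\J_{n+1}(u^{(n+1)}) \le \J_{n+1}(0) = \alpha F(0) + G_{n+1}(0) \le \alpha F(0) + \overline{G}(0)$, a bound independent of $n$, the whole sequence lies in a fixed compact set. This gives boundedness.

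Next I would show every limit point solves~\eqref{model}. The key is that $G_n \to G_\infty$ in an appropriate sense: since $\hp^{(n)} \to \hp^*$, the functionals $\chi_{\tC}(\cdot) + \tfrac{1}{2\tau}\|\cdot - \hp^{(n)}\|^2$ converge pointwise (indeed uniformly on bounded sets) to $\chi_{\tC}(\cdot) + \tfrac{1}{2\tau}\|\cdot - \hp^*\|^2$, and taking conjugates, $G_n \to G_\infty$ pointwise on $\tY$, with the convergence locally uniform because all these conjugates are finite convex functions with a common affine minorant and a common majorant $\overline G$ (local uniform convergence of convex functions follows from pointwise convergence on a finite-dimensional space). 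Consequently $\J_n \to \J$ pointwise and locally uniformly on $X$. Now let $u^* $ be a limit point, say $u^{(n_k+1)} \to u^*$. For any $v \in X$, $\J(u^*) = \lim_k \J_{n_k+1}(u^{(n_k+1)})$ (using local uniform convergence plus $u^{(n_k+1)}\to u^*$) $\le \limsup_k \J_{n_k+1}(v) = \J(v)$, because $u^{(n_k+1)}$ minimizes $\J_{n_k+1}$. Hence $\J(u^*) \le \J(v)$ for all $v$, so $u^*$ solves~\eqref{model}.

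The main obstacle I anticipate is the identification $G_\infty(-\tdiv^* u) = TV(u)$ on $X$: one must argue that the quadratic penalty term $\tfrac{1}{2\tau}\|\tp - \hp^*\|^2$ does not obstruct attaining the value $\max_{\tp\in\tC}\langle\tp,-\tdiv^* u\rangle$. This requires exploiting the structure of the saddle point — specifically that $\hp^*$ differs from the optimal $\tp^*$ only in a direction ($-\tau B^*\lambda^*$) that is handled consistently, and that the maximizer in the definition of $G_\infty(-\tdiv^* u)$ can be taken in $\ker B$ for $u\in X$ so that the penalty and the $B^*\lambda^*$ component interact benignly. A cleaner route may be to avoid identifying $G_\infty$ altogether: instead show directly that $\liminf_k \J_{n_k+1}(u^{(n_k+1)}) \ge \J(u^*)$ and $\limsup_k \J_{n_k+1}(v) \le \J(v)$ using only that $G_n$ is built from $\hp^{(n)}\to\hp^*$ together with the convergence result of Theorem~\ref{Thm:DD} and the primal-dual relation~\eqref{pd_relation}, which pins down that the limiting primal-dual pair $(u^*,\tp^*)$ satisfies the optimality system for~\eqref{model}. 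Either way, the convex-analytic bookkeeping around the penalized conjugates is where care is needed; the coercivity and compactness parts are routine given Lemmas~\ref{Lem:G_bd} and~\ref{Lem:coercive}.
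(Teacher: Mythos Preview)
Your boundedness argument matches the paper's: compare $\J_n(u^{(n)})$ against a fixed reference point (the paper uses an arbitrary $u_0\in\dom F$ rather than $0$, which is slightly safer since $F(0)<\infty$ is not assumed), then invoke Lemmas~\ref{Lem:G_bd} and~\ref{Lem:coercive} together with $\ker\tdiv^*=\{0\}$ to confine the sequence to a compact set.

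The genuine gap is in the limit-point step. The identification $G_\infty(-\tdiv^* u)=TV(u)$ is \emph{false}: the penalty $\tfrac{1}{2\tau}\|\tp-\hp^*\|_{2,\tY}^2$ does not vanish, so $G_\infty$ is a Moreau--Yosida regularization of the support function of $\tC$ (shifted by $\hp^*$), not the support function itself. Already in one dimension with $\hp^*=0$ one gets the Huber function rather than the absolute value. Hence $\J_n\to\J_\infty:=\alpha F+G_\infty(-\tdiv^*\cdot)$, and your $\Gamma$-convergence chain only shows that a limit point $u^*$ minimizes $\J_\infty$, which is a different functional from $\J$. The proposed inequalities $\liminf_k\J_{n_k+1}(u^{(n_k+1)})\ge\J(u^*)$ and $\limsup_k\J_{n_k+1}(v)\le\J(v)$ therefore do not hold as written, and relating $\argmin\J_\infty$ to $\argmin\J$ cannot be done at the level of function values.

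The paper bypasses $G_\infty$ entirely and goes straight to the optimality system---the route you mention only as a fallback. From the local saddle point~\eqref{DD_local_new} one reads off $\tdiv\tp^{(n)}/\alpha\in\partial F(u^{(n)})$ for each $n$. Along a subsequence $u^{(n)}\to u^*$, and $\tp^{(n)}\to\tp^*$ by Theorem~\ref{Thm:DD}; closedness of the graph of $\partial F$ then gives $\tdiv\tp^*/\alpha\in\partial F(u^*)$. Since $\tp^*\in\ker B$ by Proposition~\ref{Prop:DD_equiv}, this becomes $\div\tp^*/\alpha\in\partial F(u^*)$, the first line of~\eqref{pd_relation}; the second line holds because $\tp^*$ solves~\eqref{model_dual}. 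Hence $u^*$ solves~\eqref{model}. This two-line argument is what you should replace the $\Gamma$-convergence attempt with.
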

\begin{proof}
Recall that $u^{(n)}$ is a solution of the minimization problem
\begin{equation*}
\min_{u \in X} \left\{  \alpha F(u) + G_{n}(-\tdiv^* u) \right\}.
\end{equation*}
Since $F$ is proper, we may choose $u_0 \in X$ with $F(u_0) < \infty$.
By Lemma~\ref{Lem:G_bd}, we have $$t = \alpha F(u_0) + \overline{G}(-\tdiv^* u_0) < \infty.$$
Thanks to the minimization property of $u^{(n)}$, we get
\begin{align*}
\alpha F(u^{(n)}) + G_n (- \tdiv^* u^{(n)}) &\leq \alpha F(u_0) + G_n (- \tdiv^* u_0) \\
&\leq \alpha F(u_0) + \overline{G} (- \tdiv^* u_0) = t.
\end{align*}
By Lemmas~\ref{Lem:G_bd} and~\ref{Lem:coercive}, $\left\{ G_n \right\}$ is equi-coercive,
that is, there exists a compact subset $K_t$ of $\tY$ independent of $n$ such that
\begin{equation*}
\left\{ \tq \in \tY : G_n (\q ) \leq t \right\} \subset K_t .
\end{equation*}
Thus, $G_n (-\tdiv^* u^{(n)}) \leq t$ implies that $-\tdiv^* u^{(n)} \in K_t \cap \ran\tdiv^*$.
Since $\ker \tdiv^* = \left\{ 0 \right\}$, the map $-\tdiv^*$ is a continuous isomorphism between $X$ and $\ran\tdiv^*$.
Therefore, we can deduce that
\begin{equation*}
u^{(n)} \in (-\tdiv^*)^{-1}(K_t \cap \ran\tdiv^*),
\end{equation*}
which is a compact subset of $X$ independent of $n$.
This implies that $\{ u^{(n)} \}$ is bounded.

Now, we may refine $\{ u^{(n)} \}$ so that it converges to its limit point $u^*$.
Since $(u^{(n)} , \tp^{(n)})$ is a solution of a saddle point problem~\eqref{DD_local_new}, it satisfies
\begin{equation*}
\frac{\tdiv \tp^{(n)}}{\alpha} \in \partial F(u^{(n)}).
\end{equation*}
By Theorem~\ref{Thm:DD}, $\{ \tp^{(n)} \}$ converges to $\tp^*$.
As $u^{(n)} \rightarrow u^*$ and $\frac{\tdiv \tp^{(n)}}{\alpha} \rightarrow \frac{\tdiv \tp^*}{\alpha}$,
by closedness of the graph of $\partial F$~(See Theorem~24.4 in~\cite{Rockafellar:2015}), we get
\begin{equation*}
\frac{\tdiv \tp^{*}}{\alpha} \in \partial F(u^{*}).
\end{equation*}
By Proposition~\ref{Prop:DD_equiv}, $\tp^* \in \ker B$, and hence $\tp^* \in Y$.
We obtain the relation
\begin{equation*}
0 \in - \div \tp^* + \alpha \partial F (u^*) .
\end{equation*}
From the facts that $\tp^*$ is a solution of~\eqref{model_dual} (See Proposition~\ref{Prop:DD_equiv}) and $(u^* , \tp^*)$ satisfies the relation~\eqref{pd_relation}, we conclude that $u^*$ is a solution of~\eqref{model}.
\qed\end{proof}

As a direct consequence of of Theorem~\ref{Thm:DD_u}, we get the following result.

\begin{corollary}
\label{Cor:DD_u}
Let $\{ u^{(n)} \}$ be the sequence generated by Algorithm~\ref{Alg:DD}.
Then, $\{ \J (u^{(n)}) \}$ converges to $\J (u^*)$, where $u^*$ is a solution of~\eqref{model}.
\end{corollary}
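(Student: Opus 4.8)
The plan is to derive the convergence of the energies from Theorem~\ref{Thm:DD_u} together with the optimality inequality built into each iterate. Note first that $\J(u^*)$ is unambiguous: it is the minimum value of $\J$ over $X$, and such a minimizer exists because $\{u^{(n)}\}$ is bounded in the finite-dimensional space $X$ (Theorem~\ref{Thm:DD_u}) and hence has a limit point, which is a solution of~\eqref{model}. Since every solution is a global minimizer, $\J(u^{(n)}) \geq \J(u^*)$ for all $n$, so it remains to establish $\limsup_{n} \J(u^{(n)}) \leq \J(u^*)$. I would do this by showing that every subsequence of $\{u^{(n)}\}$ possesses a further subsequence along which $\J(u^{(n)}) \to \J(u^*)$; convergence of the whole sequence $\{\J(u^{(n)})\}$ then follows.

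So fix a subsequence; by boundedness (Theorem~\ref{Thm:DD_u}), pass to a further subsequence $\{u^{(n_k)}\}$ with $u^{(n_k)} \to \bar u$, where $\bar u$ solves~\eqref{model} by Theorem~\ref{Thm:DD_u}; in particular $\bar u \in \dom F$ and $\J(\bar u) = \J(u^*)$. Writing $\J(u^{(n_k)}) = \alpha F(u^{(n_k)}) + TV(u^{(n_k)})$, the total-variation term converges to $TV(\bar u)$ since $TV$ is a finite-valued convex functional on the finite-dimensional space $X$ and hence continuous. For the fidelity term, lower semicontinuity of $F$ yields $\liminf_k F(u^{(n_k)}) \geq F(\bar u)$, and the matching upper bound comes from the saddle-point property of $(u^{(n)}, \tp^{(n)})$ for~\eqref{DD_local_new}: minimizing in $u$ against the test point $\bar u$ (the indicator and quadratic terms, which depend only on $\tp$, cancel) gives
\begin{equation*}
\alpha F(u^{(n)}) \leq \alpha F(\bar u) + \left< u^{(n)} - \bar u, \tdiv \tp^{(n)} \right>_X .
\end{equation*}
Because $u^{(n_k)} \to \bar u$ while $\{\tdiv \tp^{(n)}\}$ is bounded (it converges, by Theorem~\ref{Thm:DD} and continuity of $\tdiv$), the inner product on the right tends to $0$, so $\limsup_k F(u^{(n_k)}) \leq F(\bar u)$. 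Hence $F(u^{(n_k)}) \to F(\bar u)$, and combining with the $TV$ term, $\J(u^{(n_k)}) \to \alpha F(\bar u) + TV(\bar u) = \J(\bar u) = \J(u^*)$.

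The only real difficulty is this upper bound on $F$: the functional $\J$ is in general merely lower semicontinuous (the fidelity $F$ may incorporate indicator functions, as in the convex Chan--Vese model), so $u^{(n_k)} \to \bar u$ by itself does not give $\J(u^{(n_k)}) \to \J(\bar u)$. It is exactly the optimality inequality supplied by the saddle-point structure of the local problems that turns one-sided lower semicontinuity into actual convergence of $F$ along the sequence; the remaining ingredients (continuity of $TV$, boundedness of $\{\tdiv\tp^{(n)}\}$, and the elementary subsequence argument) are routine.
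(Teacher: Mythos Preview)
Your proof is correct. The paper itself provides no argument for this corollary, stating only that it is ``a direct consequence of'' Theorem~\ref{Thm:DD_u}; your write-up supplies the details the paper omits.

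In particular, you correctly identify the one nontrivial point: because $F$ may contain an indicator (as in the convex Chan--Vese model), $\J$ is in general only lower semicontinuous on $X$, so subsequential convergence $u^{(n_k)}\to\bar u$ together with Theorem~\ref{Thm:DD_u} does \emph{not} by itself give $\J(u^{(n_k)})\to\J(\bar u)$. Your use of the saddle-point inequality for~\eqref{DD_local_new},
\[
\alpha F(u^{(n)}) \leq \alpha F(\bar u) + \left< u^{(n)} - \bar u,\ \tdiv \tp^{(n)} \right>_X,
\]
combined with the convergence of $\{\tp^{(n)}\}$ from Theorem~\ref{Thm:DD}, is exactly what is needed to close this gap and obtain $\limsup_k F(u^{(n_k)})\le F(\bar u)$. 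The remaining steps (continuity of $TV$ on the finite-dimensional space $X$, the standard every-subsequence-has-a-further-subsequence argument, and $\J(u^{(n)})\ge\J(u^*)$ by optimality) are routine, as you note. Thus your argument is somewhat more careful than the paper's implicit one, which presumably relies on the specific examples considered (where $F$ restricted to its closed effective domain is continuous and all iterates lie in that domain).
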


\section{Applications}
\label{Sec:Applications}
In this section, we apply our proposed DDM to various total variation based image processing problems mentioned in Sect.~\ref{Sec:Model}.
The proposed method was implemented in ANSI~C with OpenMPI and compiled by Intel Parallel Studio~XE.
All the computations were done on a cluster composed of seven machines, where each machine has two Intel Xeon SP-6148 CPUs~(2.4GHz, 20C), 192GB memory, and the operating system CentOS~7.4 64bit.

To emphasize efficiency of the proposed method as a parallel algorithm, we compare the wall-clock time of the proposed method with the primal-dual algorithm~\cite{CP:2011} for the full dimension problem~\eqref{model_dual}.
The wall-clock time measures the total elapsed time including the communication time.

At each iteration of Algorithm~\ref{Alg:DD}, we solve the local saddle point problems of the form
\begin{equation}
\label{local}
\min_{u_s \in X_s} \max_{\tp_s \in \tY_s} \left\{ -\left< u_s, \div \tp_s \right>_{X_s} + \alpha F_s (u_s )- G_s^* (\tp_s) \right\} ,
\end{equation}
where $F_s$ is given in Assumption~\ref{Ass:separable} and
\begin{equation*}
G_s^* (\tp_s) = \chi_{\tC_s} (\tp_s) + \frac{1}{2\tau} \| \tp_s - \hp_s^{(n+1)} \|_{2, \tY_s}^2.
\end{equation*}

The primal-dual algorithm for \eqref{local} consists of computation of the proximity operators of $\alpha F_s (u_s)$ and $G_s^* (\tp_s)$,
that is,
\begin{eqnarray*}
u_s &=& \prox_{\sigma_0 \alpha F_s} (\bar{u}_s), \\
\tp_s &=& \prox_{\tau_0 G_s^*} (\bar{\p}_s),
\end{eqnarray*}
for some $\sigma_0 , \tau_0 > 0$.
For more details, we refer readers to see~\cite{CP:2011}.
The proximity operator of $G_s^*(\tp_s)$ is computed easily as follows:
\begin{align*}
\prox_{\tau_0 G_s^*} (\bar{\p}_s) &= \argmin_{\tp_s \in \tY_s} \left\{ \chi_{\tC_s} (\tp_s) + \frac{1}{2\tau} \| \tp_s -\hp_s^{(n+1)}  \|_{2, \tY_s}^2 + \frac{1}{2\tau_0} \| \tp_s - \bar{\p}_s \|_{2, \tY_s}^2 \right\} \\
&= \proj_{\tC_s} \left( \frac{\tau \bar{\p}_s + \tau_0 \hp_s^{(n+1)}}{\tau + \tau_0}\right).
\end{align*}
Computation of the proximity operator of $\alpha F_s (u_s)$ depends on the problem to solve.
Thus, we will give details in each subsection.
 
 Since $G_s^* $ is uniformly convex with parameter $1/\tau$, we are able to adopt the $O(1/n^2)$ convergent primal-dual algorithm~\cite[Algorithm~2]{CP:2011} for the local problems.
Such acceleration of the local solvers for DDMs was discussed in~\cite{LNP:2018,LPP:2019}.

Next, we provide the setting of the used parameters.
 We set the parameters for the outer iterations by $\tau=50$ and $\sigma \tau = 1/2$ for the image denoising and inpainting problems, and $\tau = 1, \sigma \tau = 1/2$ for the image segmentation problem.
 For the $O(1/n^2)$ convergent algorithms for the local problems, we set $\gamma = 1/8\tau$, $\tau_0 = 10$, and $\sigma_0 \tau_0 = 1/8$.
 (The same notations for the parameters are used as in~\cite{CP:2011}).
 The full dimension problems ($\mathcal{N} = 1$) are solved by the primal-dual algorithm with the same parameters as the corresponding local problems.
 
 Finally, we note that the local solutions $(u_s^{(n-1)}, \tp_s^{(n-1)})$ from the previous outer iteration were chosen as initial guesses for the local problems to reduce the number of inner iterations.

\subsection{Image denoising}
\begin{figure}[]
\centering
\begin{subfigure}[Noisy ``Cameraman $2048 \times 2048$" (PSNR: 12.07)]
{ \includegraphics[width=3.7cm]{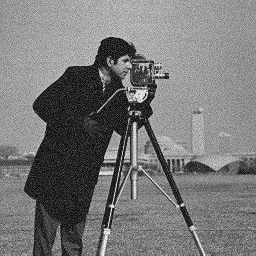} }
\end{subfigure}
\begin{subfigure}[$\mathcal{N} = 1$ (PSNR: 47.68)]
{ \includegraphics[width=3.7cm]{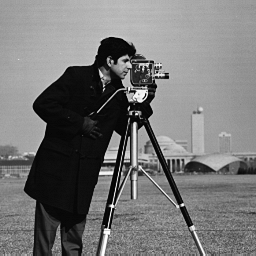} }
\end{subfigure}
\begin{subfigure}[$\mathcal{N} = 16 \times 16$ (PSNR: 48.24)]
{ \includegraphics[width=3.7cm]{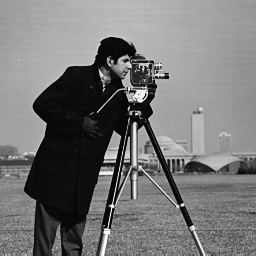} } 
\end{subfigure}

\begin{subfigure}[Noisy ``Boat $2048\times 3072$'' (PSNR: 12.34)]
{ \includegraphics[width=3.7cm]{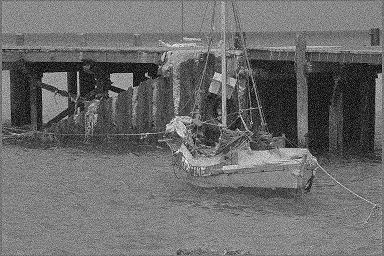} }
\end{subfigure}
\begin{subfigure}[$\mathcal{N} = 1$ (PSNR: 33.82)]
{ \includegraphics[width=3.7cm]{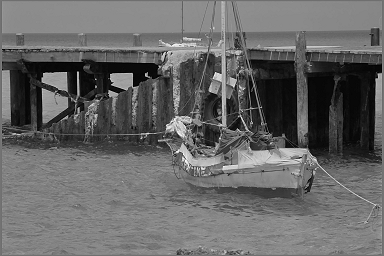} }
\end{subfigure}
\begin{subfigure}[$\mathcal{N} = 16 \times 16$ (PSNR: 33.92)]
{ \includegraphics[width=3.7cm]{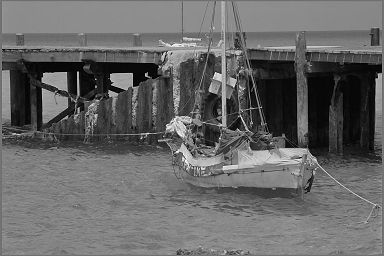} } 
\end{subfigure}

\caption{Results of the proposed method for the image denoising problem with the $L^1$ fidelity term~\eqref{denoising_L1}}
\label{Fig:denoising_L1}
\end{figure}

\begin{figure}[]
\centering
\begin{subfigure}[Cameraman $2048 \times 2048$]
{ \includegraphics[width=5cm]{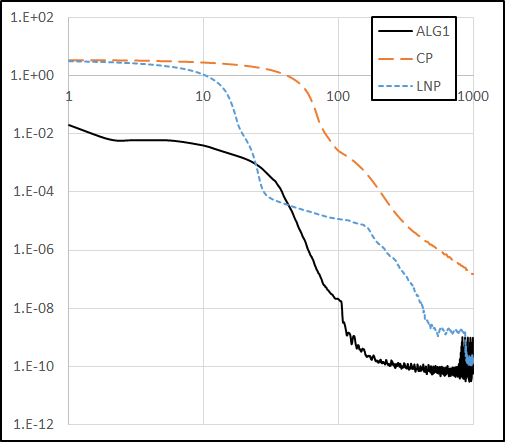} }
\end{subfigure}
\quad\quad
\begin{subfigure}[Boat $2048 \times 3072$]
{ \includegraphics[width=5cm]{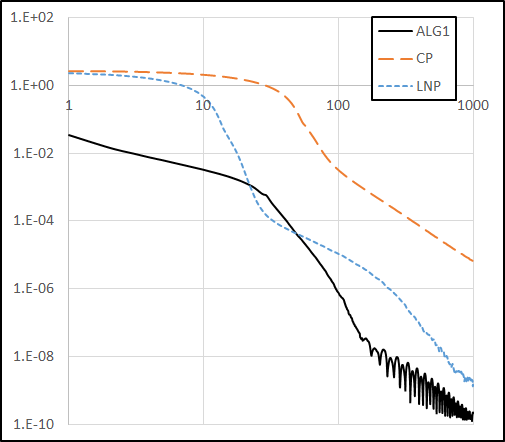} }
\end{subfigure}
\caption{Decay of $\frac{\J (u^{(n)}) - \J (u^*)}{\J (u^*)}$ with respect to the number of iterations~$n$ for various algorithms, applied to the image denoising problem with the $L^1$ fidelity term~\eqref{denoising_L1}}
\label{Fig:L1den_en}
\end{figure}

\begin{table}
\centering
\resizebox{\textwidth}{!}{
\begin{tabular}{| c | c | c c c c | c c c c |} \hline
\multirow{2}{*}{\shortstack{\begin{phantom}1\end{phantom} \\ \begin{phantom}1\end{phantom} \\ Test image}} & 
\multirow{2}{*}{\shortstack{\begin{phantom}1\end{phantom} \\ \begin{phantom}1\end{phantom} \\ $\N$}} &
\multicolumn{4}{c|}{ALG1} & \multicolumn{4}{c|}{LNP} \\ \cline{3-10}
&& PSNR & iter & \begin{tabular}{c}max\\inner\\ iter\end{tabular} & \begin{tabular}{c}wall-clock\\ time (sec)\end{tabular} 
& PSNR & iter & \begin{tabular}{c}max\\inner\\ iter\end{tabular} & \begin{tabular}{c}wall-clock\\ time (sec)\end{tabular} \\
\hline
\multirow{4}{*}{\shortstack{\begin{phantom}1\end{phantom} \\ Cameraman \\ $2048 \times 2048$}}
& 1 & 47.68 & - & 546 & 86.30 					& & & & \\ \cline{2-10}
& $2 \times 2$ & 48.26 & 39 & 266 & 155.85 		& 46.75 & 33 & 1761 & 587.86 \\
& $4 \times 4$ & 48.26 & 45 & 277 & 50.40 		& 46.75 & 61 & 1679 & 141.31 \\
& $8 \times 8$ & 48.25 & 49 & 274 & 14.79 		& 46.78 & 134 & 1796 & 42.37 \\
& $16 \times 16$ & 48.24 & 54 & 279 & 2.19 		& 46.79 & 175 & 1200 & 7.82 \\
\hline
\multirow{4}{*}{\shortstack{\begin{phantom}1\end{phantom} \\ Boat \\ $2048 \times 3072$}}
& 1 & 33.82 & - & 2093 & 476.02 				& & & & \\ \cline{2-10}
& $2 \times 2$ & 33.92 & 61 & 283 & 531.97 		& 33.60 & 43 & 1815 & 970.12 \\
& $4 \times 4$ & 33.92 & 63 & 276 & 165.45 		& 33.62 & 71 & 1823 & 255.70 \\
& $8 \times 8$ & 33.92 & 66 & 258 & 53.68 		& 33.62 & 114 & 1413 & 87.23 \\
& $16 \times 16$ & 33.92 & 71 & 246 & 8.18 		& 33.63 & 161 & 844 & 14.07 \\
\hline
\end{tabular}
}
\caption{Performance of the proposed method for the image denoising problem with the $L^1$ fidelity term~\eqref{denoising_L1}}
\label{Table:L1den}
\end{table}

We present the results of numerical experiments for the $TV$-$L^1$ model for image denoising:
\begin{equation}
\label{denoising_L1}
\min_{u \in X} \left\{ \alpha \| u - f \|_{1, X} + TV(u) \right\}.
\end{equation}
We note that numerical results of the proposed method for ROF model were given in~\cite{LPP:2019}. 
In~\eqref{denoising_L1}, $\alpha F_s (u_s)$ is given by
\begin{equation*}
\alpha F_s (u_s) =  \alpha \| u_s - f \|_{1, X_s}
\end{equation*}
and its proximity operator can be computed as
\begin{equation*}
\prox_{\sigma_0 \alpha F_s} (\bar{u}_s) = f + \shr_{s, \Omega_s} (\bar{u}_s - f, \sigma_0 \alpha),
\end{equation*}
where the shrinkage operator $\shr_{s, S}$ on $S \subset \Omega_s$ is defined by replacing $\Omega$ and $\mathcal{T}$ by $\Omega_s$ and $\mathcal{T}_s$ in~\eqref{shrinkage}, respectively.

We use two test images ``Cameraman $2048 \times 2048$'' and ``Boat $2048 \times 3072$'' with $20\%$ salt-and-pepper noise~(See Fig.~\ref{Fig:denoising_L1}).
PSNR denotes peak signal-to-noise ratio.
We set~$\alpha = 1$.

Thanks to Proposition~\ref{Prop:equiv}, it is able to compare the proposed method with the existing methods for~\eqref{denoising_L1} based on the finite difference discretization.
The following algorithms are used for our performance evaluation:
\begin{itemize}
\item ALG1: Proposed method described in Algorithm~\ref{Alg:DD}, $\N = 8\times 8$.
\item LNP: DDM proposed by Lee, Nam, and Park~\cite{LNP:2018} with the anisotropic total variation, $\N = 8\times 8$, $\tau = 0.05$, $\sigma \tau = 1/8$.
\item CP: Primal-dual algorithm proposed by Chambolle and Pock~\cite{CP:2011} with the anisotropic total variation, $\tau = 10$, $\sigma\tau = 1/8$.
\end{itemize}

In the numerical experiments, the following stopping criterion is used for outer iterations:
\begin{equation}
\label{stop_outer}
\frac{\J (u^{(n)}) - \J (u^*)}{\J (u^*)} < 10^{-5},
\end{equation}
and the following ones are used for inner iterations of ALG1 and LNP, respectively:
\begin{subequations} \begin{eqnarray}
\label{stop_inner1} \frac{\| \p_s^{(n+1)} - \p_s^{(n)} \|_2}{\| \p_s^{(n+1)} \|_2} < 10^{-6}, \\
\label{stop_inner2} \frac{\| u_s^{(n+1)} - u_s^{(n)} \|_2}{\| u_s^{(n+1)} \|_2} < 10^{-6}.
\end{eqnarray} \end{subequations}
Fig.~\ref{Fig:denoising_L1} shows the denoised images obtained by ALG1 with $\N = 16 \times 16$ and CP~($\N = 1$).
In order to highlight the efficiency of the proposed method as a parallel solver, Table~\ref{Table:L1den} shows the performance of the methods with the varying number of subdomains.
In addition, to compare the convergence rate of the proposed method with existing algorithms, we present Fig.~\ref{Fig:L1den_en} which shows decay of $\frac{\J (u^{(n)}) - \J (u^*)}{\J (u^*)}$ for 1000 iterations of three algorithms ALG1, LNP, and CP, where the minimum primal energy~$\J (u^*)$ is computed approximately by $10^6$ iterations of the primal-dual algorithm.

\subsection{Image inpainting}
\begin{figure}[]
\centering
\begin{subfigure}[Corrupted ``Cameraman $2048 \times 2048$" (PSNR: 15.83)]
{ \includegraphics[width=3.7cm]{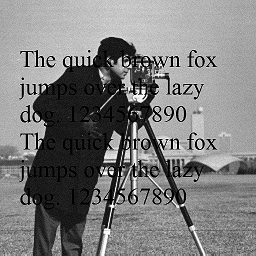} }
\end{subfigure}
\begin{subfigure}[$\mathcal{N} = 1$ (PSNR: 24.96)]
{ \includegraphics[width=3.7cm]{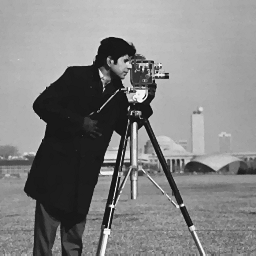} }
\end{subfigure}
\begin{subfigure}[$\mathcal{N} = 16 \times 16$ (PSNR: 25.00)]
{ \includegraphics[width=3.7cm]{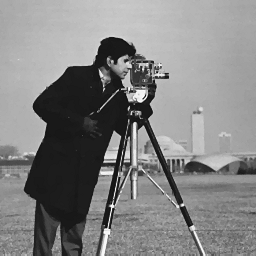} } 
\end{subfigure}

\begin{subfigure}[Corrupted ``Boat $2048\times 3072$'' (PSNR: 16.66)]
{ \includegraphics[width=3.7cm]{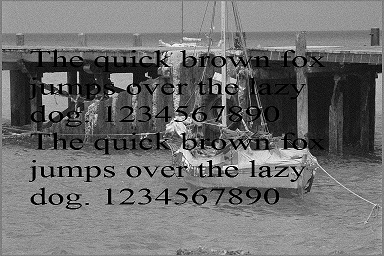} }
\end{subfigure}
\begin{subfigure}[$\mathcal{N} = 1$ (PSNR: 24.32)]
{ \includegraphics[width=3.7cm]{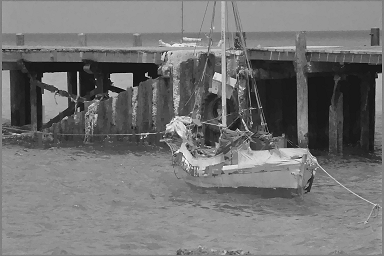} }
\end{subfigure}
\begin{subfigure}[$\mathcal{N} = 16 \times 16$ (PSNR: 24.33)]
{ \includegraphics[width=3.7cm]{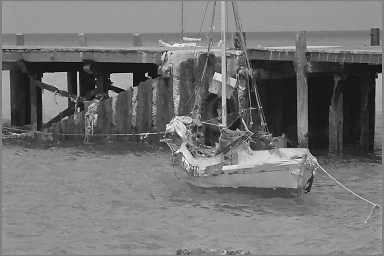} } 
\end{subfigure}

\caption{Results of the proposed method for the image inpainting problem with the $L^2$ fidelity term~\eqref{inpainting_L2}}
\label{Fig:inpainting_L2}
\end{figure}

\begin{figure}[]
\centering
\begin{subfigure}[Cameraman $2048 \times 2048$]
{ \includegraphics[width=5cm]{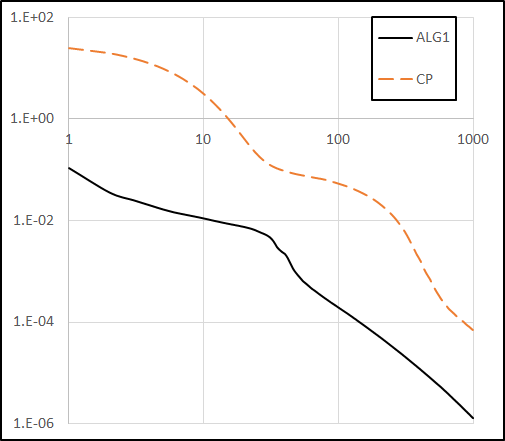} }
\end{subfigure}
\quad\quad
\begin{subfigure}[Boat $2048 \times 3072$]
{ \includegraphics[width=5cm]{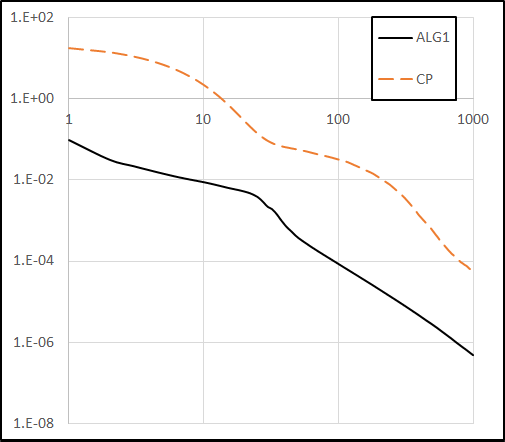} }
\end{subfigure}
\caption{Decay of $\frac{\J (u^{(n)}) - \J (u^*)}{\J (u^*)}$ with respect to the number of iterations~$n$ for various algorithms, applied to the image inpainting problem with the $L^2$ fidelity term~\eqref{inpainting_L2}}
\label{Fig:L2inp_en}
\end{figure}

\begin{table}
\centering
\begin{tabular}{| c | c | c c c c |} \hline
Test image & $\mathcal{N}$ & PSNR & iter & \begin{tabular}{c}max\\inner\\ iter\end{tabular} & \begin{tabular}{c}wall-clock\\ time (sec)\end{tabular} \\
\hline
\multirow{4}{*}{\shortstack{\begin{phantom}1\end{phantom} \\ Cameraman \\ $2048 \times 2048$}}
& 1 & 24.96 & - & 2324 & 389.88 \\ \cline{2-6}
& $2 \times 2$ & 25.00 & 438 & 613 & 805.88 \\
& $4 \times 4$ & 25.00 & 440 & 713 & 260.06 \\
& $8 \times 8$ & 25.00 & 445 & 99 & 88.96 \\
& $16 \times 16$ & 25.00 & 460 & 99 & 13.40 \\
\hline
\multirow{4}{*}{\shortstack{\begin{phantom}1\end{phantom} \\ Boat \\ $2048 \times 3072$}}
& 1 & 24.32 & - & 1861 & 436.32 \\ \cline{2-6}
& $2 \times 2$ & 24.33 & 285 & 817 & 1085.96 \\
& $4 \times 4$ & 24.33 & 286 & 588 & 361.04 \\
& $8 \times 8$ & 24.33 & 289 & 111 & 132.14 \\
& $16 \times 16$ & 24.33 & 298 & 117 & 25.63 \\
\hline
\end{tabular}
\caption{Performance of the proposed method for the image inpainting problem with the $L^2$ fidelity term~\eqref{inpainting_L2}}
\label{Table:L2inp}
\end{table}

We first consider the inpainting model with the $L^2$ fidelity term:
\begin{equation}
\label{inpainting_L2}
\min_{u \in X} \left\{ \frac{\alpha}{2} \| Au - f \|_{2, X}^2 + TV(u) \right\} .
\end{equation}
Here, $A$:~$X \rightarrow X$ is the restriction operator onto $\Omega \setminus D$, so that
its matrix representation is a diagonal matrix whose diagonal entries are either $0$ or $1$.
Thus, computation of the proximity operator of $\alpha F_s (u_s) = \frac{\alpha}{2} \| A_s u_s - f \|_{2, X_s}^2$ is as easy as the case of the denoising problem.
Indeed, we have
\begin{equation*}
\left( \prox_{\sigma_0 \alpha F_s} (\bar{u}_s) \right)_T =
\begin{cases}
\frac{ ( \bar{u}_s )_T + \sigma_0 \alpha (f)_T}{1 + \sigma_0 \alpha} & \textrm{ if } T \subset \Omega_s \setminus D , \\
( \bar{u}_s )_T & \textrm{ if } T \subset D,
\end{cases}
\hspace{0.2cm} T \in \mathcal{T}_s.
\end{equation*}

Two test images ``Cameraman $2048 \times 2048$'' and ``Boat $2048 \times 3072$'' corrupted by additive Gaussian noise with mean $0$ and variance $0.05$ and a text mask are used for numerical experiments~(See Fig.~\ref{Fig:inpainting_L2}).
The model parameter $\alpha$ is set as $\alpha = 10$.
Table~\ref{Table:L2inp} shows the computation results of the proposed method for the varying number of subdomains.
Stop conditions~\eqref{stop_outer} and~\eqref{stop_inner1} are used for outer iterations and inner iterations of ALG1, respectively.
Fig.~\ref{Fig:inpainting_L2} shows the resulting images of ALG1 with $\N = 16 \times 16$ and CP.
Fig.~\ref{Fig:L2inp_en} shows decay of $\frac{\J (u^{(n)}) - \J (u^*)}{\J (u^*)}$ for two algorithms ALG1 and CP.
Here, $\J (u^*)$ is computed by $10^6$ iterations of the primal-dual algorithm.

\begin{figure}[]
\centering
\begin{subfigure}[Corrupted ``Cameraman $2048 \times 2048$" (PSNR: 11.41)]
{ \includegraphics[width=3.7cm]{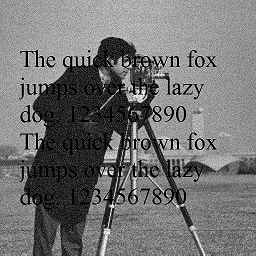} }
\end{subfigure}
\begin{subfigure}[$\mathcal{N} = 1$ (PSNR: 34.70)]
{ \includegraphics[width=3.7cm]{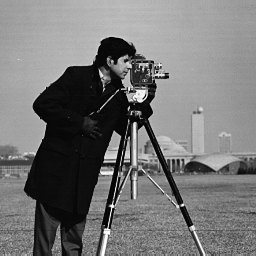} }
\end{subfigure}
\begin{subfigure}[$\mathcal{N} = 16 \times 16$ (PSNR: 35.53)]
{ \includegraphics[width=3.7cm]{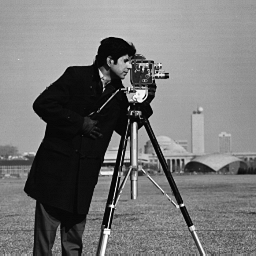} } 
\end{subfigure}

\begin{subfigure}[Corrupted ``Boat $2048\times 3072$'' (PSNR: 11.92)]
{ \includegraphics[width=3.7cm]{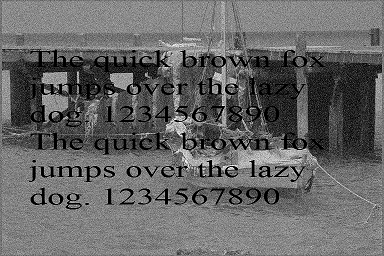} }
\end{subfigure}
\begin{subfigure}[$\mathcal{N} = 1$ (PSNR: 30.83)]
{ \includegraphics[width=3.7cm]{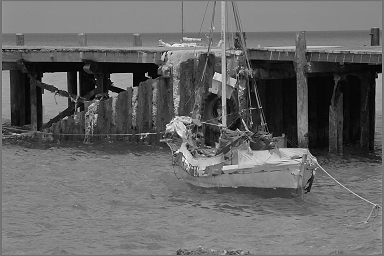} }
\end{subfigure}
\begin{subfigure}[$\mathcal{N} = 16 \times 16$ (PSNR: 31.07)]
{ \includegraphics[width=3.7cm]{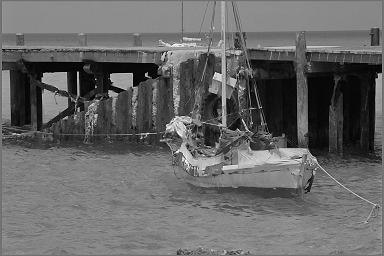} } 
\end{subfigure}

\caption{Results of the proposed method for the image inpainting problem with the $L^1$ fidelity term~\eqref{inpainting_L1}}
\label{Fig:inpainting_L1}
\end{figure}

\begin{figure}[]
\centering
\begin{subfigure}[Cameraman $2048 \times 2048$]
{ \includegraphics[width=5cm]{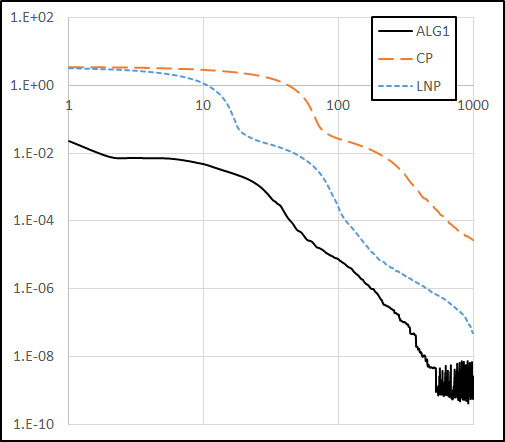} }
\end{subfigure}
\quad\quad
\begin{subfigure}[Boat $2048 \times 3072$]
{ \includegraphics[width=5cm]{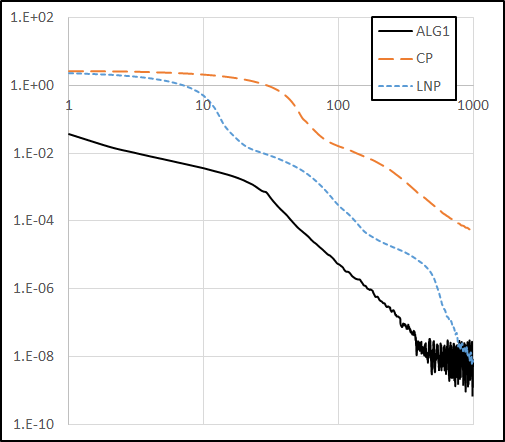} }
\end{subfigure}
\caption{Decay of $\frac{\J (u^{(n)}) - \J (u^*)}{\J (u^*)}$ with respect to the number of iterations~$n$ for various algorithms, applied to the image inpainting problem with the $L^1$ fidelity term~\eqref{inpainting_L1}}
\label{Fig:L1inp_en}
\end{figure}

\begin{table}
\centering
\resizebox{\textwidth}{!}{
\begin{tabular}{| c | c | c c c c | c c c c |} \hline
\multirow{2}{*}{\shortstack{\begin{phantom}1\end{phantom} \\ \begin{phantom}1\end{phantom} \\ Test image}} & 
\multirow{2}{*}{\shortstack{\begin{phantom}1\end{phantom} \\ \begin{phantom}1\end{phantom} \\ $\N$}} &
\multicolumn{4}{c|}{ALG1} & \multicolumn{4}{c|}{LNP} \\ \cline{3-10}
&& PSNR & iter & \begin{tabular}{c}max\\inner\\ iter\end{tabular} & \begin{tabular}{c}wall-clock\\ time (sec)\end{tabular} 
& PSNR & iter & \begin{tabular}{c}max\\inner\\ iter\end{tabular} & \begin{tabular}{c}wall-clock\\ time (sec)\end{tabular} \\
\hline
\multirow{4}{*}{\shortstack{\begin{phantom}1\end{phantom} \\ Cameraman \\ $2048 \times 2048$}}
& 1 & 34.70 & - & 1693 & 304.05 					& & & & \\ \cline{2-10}
& $2 \times 2$ & 35.56 & 78 & 921 & 615.5 			& 34.18 & 173 & 1530 & 1132.71 \\
& $4 \times 4$ & 35.53 & 86 & 958 & 244.21 			& 34.24 & 190 & 1672 & 323.98 \\
& $8 \times 8$ & 35.53 & 90 & 274 & 68.44			& 34.26 & 205 & 1796 & 97.65 \\
& $16 \times 16$ & 35.53 & 101 & 279 & 16.10 		& 34.37 & 239 & 1200 & 24.67 \\
\hline
\multirow{4}{*}{\shortstack{\begin{phantom}1\end{phantom} \\ Boat \\ $2048 \times 3072$}}
& 1 & 30.83 & - & 2262 & 604.34 					& & & & \\ \cline{2-10}
& $2 \times 2$ & 31.08 & 80 & 860 & 1283.93 		& 30.53 & 368 & 1703 & 2134.45 \\
& $4 \times 4$ & 31.08 & 84 & 765 & 432.68 			& 30.55 & 374 & 1888 & 635.88 \\
& $8 \times 8$ & 31.08 & 88 & 258 & 128.09 			& 30.56 & 384 & 1413 & 217.33 \\
& $16 \times 16$ & 31.07 & 97 & 246 & 31.95 		& 30.58 & 402 & 844 & 60.68 \\
\hline
\end{tabular}
}
\caption{Performance of the proposed method for the image inpainting problem with the $L^1$ fidelity term~\eqref{inpainting_L1}}
\label{Table:L1inp}
\end{table}

Now, we consider the following $L^1$ inpainting model:
\begin{equation}
\label{inpainting_L1}
\min_{u \in X} \left\{ \alpha \| Au - f \|_{1, X} + TV(u) \right\}.
\end{equation}
Similarly to the $L^2$ inpainting problem, the proximity operator of $\alpha F_s (u_s) = \alpha \| A_s u_s -f \|_{1, X}$ is given by
\begin{equation*}
\prox_{\sigma_0 \alpha F_s} (\bar{u}_s) = f + \shr_{s, \Omega_s \setminus D} (\bar{u}_s - f, \sigma_0 \alpha ).
\end{equation*}
Test images are corrupted by $20\%$ salt-and-pepper noise and the same text mask as the $L^2$ inpainting problem~~(See Fig.~\ref{Fig:inpainting_L1}).
We use $\alpha = 1$ as the model parameter.
Numerical results of ALG1, LNP, and CP for~\eqref{inpainting_L1} are given in Table~\ref{Table:L1inp}.
Both ALG1 and LNP uses~\eqref{stop_outer} as the stopping criterion for outer iterations.
Stopping criteria for local problems are given in~\eqref{stop_inner1} and~\eqref{stop_inner2} for ALG1 and LNP, respectively.
The recovered images obtained by ALG1 and CP are given in Fig.~\ref{Fig:inpainting_L1}.
Fig.~\ref{Fig:L1inp_en} shows decay of the value of~$\frac{\J (u^{(n)}) - \J (u^*)}{\J (u^*)}$ for three algorithms ALG1, LNP, and CP.

\subsection{Image segmentation}
\begin{figure}[]
\centering
\begin{subfigure}[Cameraman $2048 \times 2048$]
{ \includegraphics[width=3.7cm]{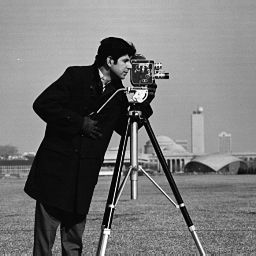} }
\end{subfigure}
\begin{subfigure}[$\mathcal{N} = 1$]
{ \includegraphics[width=3.7cm]{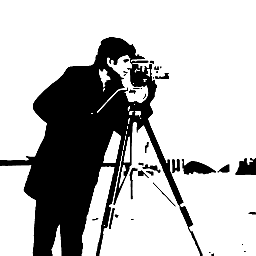} }
\end{subfigure}
\begin{subfigure}[$\mathcal{N} = 16 \times 16$]
{ \includegraphics[width=3.7cm]{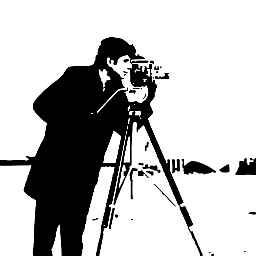} } 
\end{subfigure}

\caption{Results of the proposed method for the image segmentation problem~\eqref{segmentation}}
\label{Fig:segmentation}
\end{figure}

\begin{figure}[]
\centering
\includegraphics[width=5cm]{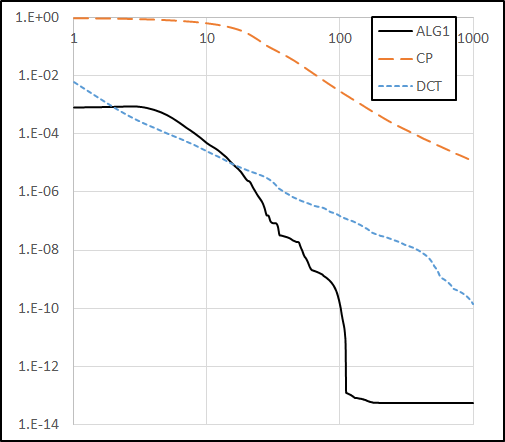}
\caption{Decay of $\frac{\J (u^{(n)}) - \J (u^*)}{| \J (u^*) |}$ with respect to the number of iterations~$n$ for various algorithms, applied to the image segmentation problem~\eqref{segmentation}}
\label{Fig:seg_en}
\end{figure}

\begin{table}
\centering
\resizebox{\textwidth}{!}{
\begin{tabular}{| c | c | c c c | c c c |} \hline
\multirow{2}{*}{\shortstack{\begin{phantom}1\end{phantom} \\ \begin{phantom}1\end{phantom} \\ Test image}} & 
\multirow{2}{*}{\shortstack{\begin{phantom}1\end{phantom} \\ \begin{phantom}1\end{phantom} \\ $\N$}} &
\multicolumn{3}{c|}{ALG1} & \multicolumn{3}{c|}{DCT} \\ \cline{3-8}
&& iter & \begin{tabular}{c}max\\inner\\ iter\end{tabular} & \begin{tabular}{c}wall-clock\\ time (sec)\end{tabular} 
& iter & \begin{tabular}{c}max\\inner\\ iter\end{tabular} & \begin{tabular}{c}wall-clock\\ time (sec)\end{tabular} \\
\hline
\multirow{4}{*}{\shortstack{\begin{phantom}1\end{phantom} \\ Cameraman \\ $2048 \times 2048$}}
& 1 & - & 1070 & 172.47 					& & &  \\ \cline{2-8}
& $2 \times 2$ & 16 & 258 & 153.72 		& 14 & 290 & 85.67 \\
& $4 \times 4$ & 16 & 63 & 37.58 			& 15 & 52 & 18.21 \\
& $8 \times 8$ & 16 & 55 & 6.65 			& 15 & 52 & 5.31 \\
& $16 \times 16$ & 16 & 56 & 1.10 			& 16 & 55 & 1.43 \\
\hline
\end{tabular}
}
\caption{Performance of the proposed method for the image segmentation problem~\eqref{segmentation}}
\label{Table:seg}
\end{table}

As we mentioned in Sect.~\ref{Sec:Model}, the convex Chan--Vese model for the image segmentation model is represented as
\begin{equation}
\label{segmentation}
\min_{u \in X} \left\{ \alpha \left< u , g \right>_X + \chi_{\left\{ 0 \leq u \leq 1\right\}} (u) + TV(u) \right\},
\end{equation}
where $g = (f-c_1)^2 - (f-c_2)^2$.
The proximity operator of $\alpha F_s (u_s) = \left< u , g \right>_X + \chi_{\left\{ 0 \leq u \leq 1\right\}} (u)$ is computed as
\begin{equation*}
\prox_{\sigma_0 \alpha F_s} (\bar{u}_s) = \proj_{\left\{ 0 \leq \cdot \leq 1\right\}} (\bar{u}_s - \sigma_0 \alpha g).
\end{equation*}
As shown in Fig.~\ref{Fig:segmentation}, we use a test image ``Cameraman $2048 \times 2048$.''
We set the model parameters $\alpha = 10$, $c_1 = 0.6$, and $c_2 = 0.1$ heuristically.
Also, to convert the results to binary functions, we use a threshold parameter $1/2$.
Numerical results of ALG1, DCT, and CP for~\eqref{segmentation} are presented in Table~\ref{Table:seg}, where DCT denotes the following algorithm:
\begin{itemize}
\item DCT: DDM proposed by Duan, Chang, and Tai~\cite{DCT:2016} with the anisotropic total variation, $\N = 8 \times 8$, $\tau = 1$, $\sigma \tau = 1/8$.
\end{itemize}
Both ALG1 and DCT uses~\eqref{stop_outer} as a stop condition for outer iterations.
For stop conditions for local problems, ALG1 uses~\eqref{stop_inner1} while DCT uses~\eqref{stop_inner2}.
The segmentation results obtained by ALG1 and CP are provided in Fig.~\ref{Fig:segmentation}.
Fig.~\ref{Fig:seg_en} presents the energy decay of several algorithms containing the proposed method for~\eqref{segmentation}.
Since the minimum primal energy~$\J (u^*)$ is negative in this case, we plot the value of~$\frac{\J (u^{(n)}) - \J (u^*)}{|\J (u^*)|}$, where $\J (u^*)$ is computed by $10^6$ iterations of the primal-dual algorithm.

\subsection{Discussion}
As shown in Figs.~\ref{Fig:denoising_L1}, \ref{Fig:inpainting_L2}, \ref{Fig:inpainting_L1}, and \ref{Fig:segmentation}, the results of the full dimension problem and the proposed method are not visually distinguishable.
Note that the resulting images of the proposed method show no trace of the subdomain interfaces.
In Tables~\ref{Table:L1den}--\ref{Table:seg}, one can observe that solutions for different values of $\N$ have different PSNRs.
It means that the algorithms converge to different solutions.
This is because~\eqref{model_old} admits nonunique solutions in general.
However, as the stopping criterion~\eqref{stop_outer} indicates, their primal energies tend to the minimum.

In Tables~\ref{Table:L1den}--\ref{Table:seg}, one can see that the number of outer iterations monotonically increases as the number of subdomains~$\N$ increases.
We also observe that the numbers of maximum inner iterations of the proposed method are much smaller than the numbers of iterations of the full dimension problem.
The reason is that we utilize more accelerated solvers for the local problems than standard ones.
Reduction of the numbers of inner iterations makes the proposed method faster.

For every problem, we see that the wall-clock time is decreasing as the number of subdomains $\mathcal{N}$ grows.
In particular, large scale images such as ``Cameraman $2048 \times 2048$'' and ``Boat $2048 \times 3072$'' can be processed in a minute with sufficiently many subdomains, while it takes quite a long time with a single domain.
This shows efficiency of the proposed method as a parallel solver for image processing.

Finally, as shown in Figs.~\ref{Fig:L1den_en}, \ref{Fig:L2inp_en}, \ref{Fig:L1inp_en}, and \ref{Fig:seg_en}, the proposed method  shows good performance in terms of the decay rate of the primal energy~$\J (u^{(n)})$.
To be more precise, the primal energy of the proposed method decreases much faster than the one of CP for the full-dimension problem.
The proposed method also outperforms LNP which is a recently developed DDM for $TV$-$L^1$ problems.
Even more, for the segmentation problem, the convergence rate of the proposed method seems to be faster than $O(1/n)$.
However, a theoretical evidence for such fast convergence is still missing.
It is observed in Figs.~\ref{Fig:L1den_en}, \ref{Fig:L2inp_en}, \ref{Fig:L1inp_en}, and \ref{Fig:seg_en} that the primal energy of the proposed method becomes stagnant when the relative error is sufficiently small.
This is due to that local problems are solved inexactly by iterative methods in each iteration.
Such stagnation of the primal energy is not problematic in practice because the quality of the recovered image becomes acceptable enough before the stagnation starts.
Meanwhile, in terms of wall-clock time, the proposed method outperforms LNP for both denoising and inpainting problems while it shows the similar performance to DCT for the segmentation problem.

\section{Conclusion}
\label{Sec:Conclusion}
In this paper, we generalized the primal-dual DDM for the ROF model proposed in~\cite{LPP:2019} to more general total variation minimization problem.
The Fenchel--Rockafellar dual of the model problem was considered.
We constructed the constrained minimization problem which has domain decomposition structure and is equivalent to the dual model problem.
The constrained minimization problem was converted to the equivalent saddle point problem by the method of Lagrange multipliers.
The resulting saddle point problem was solved by the first order primal-dual algorithm and convergence of the dual solution was guaranteed.
We also proved convergence of the primal solution.
Numerical results showed that the proposed method is superior to the existing methods in the sense of convergence rate, and is much faster with sufficiently many subdomains than the full dimension problem.

Even though the proposed DDM is applicable for various total variation regularized problems, we point out that the proposed method is not appropriate for the image deconvolution problem.
We will develop a DDM with similar strategy which is applicable for the image deconvolution problem later.


\end{document}